\newif\ifxetexorluatex
\newtheorem{theorem}{Theorem}
\newtheorem{proposition}{Proposition}
\newcommand{\R}{\mathbb{R}}
\newcommand{\C}{\mathbb{C}}
\newcommand{\cX}{\mathcal{X}}
\newcommand{\cC}{\mathcal{C}}
\newcommand{\cG}{\mathcal{G}}
\newcommand{\cH}{\mathcal{H}}
\newcommand{\cGO}{\mathcal{GO}}
\newcommand{\cF}{\mathcal{F}}
\newcommand{\bpi}{\boldsymbol{\pi}}
\newcommand{\cK}{\mathcal{K}}
\newcommand{\cQ}{\mathcal{Q}}
\newcommand{\cR}{\mathcal{R}}
\newcommand{\cS}{\mathcal{S}}
\newcommand{\fll}{\mathfrak{l}}
\newcommand{\bL}{\boldsymbol{L}}
\newcommand{\bW}{\boldsymbol{W}}
\newcommand{\bF}{\boldsymbol{F}}
\newcommand{\bQ}{\boldsymbol{Q}}
\newcommand{\bG}{\boldsymbol{G}}
\newcommand{\bS}{\boldsymbol{S}}
\newcommand{\bU}{\boldsymbol{U}}
\newcommand{\bx}{\boldsymbol{x}}
\newcommand{\by}{\boldsymbol{y}}
\newcommand{\bv}{\boldsymbol{v}}
\newcommand{\bX}{\boldsymbol{X}}
\newcommand{\bI}{\boldsymbol{I}}
\newcommand{\bA}{\boldsymbol{A}}
\newcommand{\bB}{\boldsymbol{B}}
\newcommand{\bC}{\boldsymbol{C}}
\newcommand{\bO}{\boldsymbol{O}}
\newcommand{\bD}{\boldsymbol{D}}
\newcommand{\bJ}{\boldsymbol{J}}
\newcommand{\bK}{\boldsymbol{K}}
\newcommand{\Xlag}{\boldsymbol{X}_{\textsc{LAG}}}
\newcommand{\bY}{\boldsymbol{Y}}
\newcommand{\bH}{\boldsymbol{H}}
\newcommand{\bT}{\boldsymbol{T}}
\newcommand{\bP}{\boldsymbol{P}}
\newcommand{\Gperp}{\boldsymbol{G}^o_{\perp}}
\newcommand{\bZ}{\boldsymbol{Z}}
\newcommand{\bep}{\boldsymbol{\epsilon}}
\newcommand{\bPhi}{\boldsymbol{\Phi}}
\newcommand{\baP}{\boldsymbol{\bar{P}}}
\newcommand{\baq}{\boldsymbol{\bar{q}}}
\newcommand{\bXLag}{\boldsymbol{X}_{\textsc{lag}}}
\newcommand{\AR}{\textsc{AR}}
\newcommand{\MA}{\textsc{MA}}
\newcommand{\VAR}{\textsc{var}}
\newcommand{\VARX}{\textsc{varx}}
\newcommand{\VARMA}{\textsc{varma}}
\newcommand{\nmin}{n_{\textsc{min}}}
\newcommand{\GO}{\begin{bmatrix}\boldsymbol{G}^o_{:, 0}\\ \boldsymbol{G}_{\perp}\end{bmatrix}}
\DeclareMathOperator{\diag}{diag}
\DeclareMathOperator{\Tr}{Tr}
\DeclareMathOperator{\Mat}{Mat}
\title{A theorem of Kalman and minimal state-space realization of Vector Autoregressive Models.}
\author{Du Nguyen \\ nguyendu@post.havard.edu}
\begin{document}
\maketitle
\abstract{We introduce a concept of $autoregressive$ ($\AR$) state-space realization that could be applied to all transfer functions $\bT(L)$ with $\bT(0)$ invertible. We show that a theorem of Kalman implies each Vector Autoregressive model (with exogenous variables) has a minimal $\AR$-state-space realization of form $\by_t = \sum_{i=1}^p\bH\bF^{i-1}\bG\bx_{t-i}+\bep_t$ where $\bF$ is a nilpotent Jordan matrix and $\bH, \bG$ satisfy certain rank conditions. The case $\VARX(1)$ corresponds to reduced-rank regression. Similar to that case, for a fixed Jordan form $\bF$, $\bH$ could be estimated by least square as a function of $\bG$. The likelihood function is a determinant ratio generalizing the Rayleigh quotient. It is unchanged if $\bG$ is replaced by $\bS\bG$ for an invertible matrix $\bS$ commuting with $\bF$. Using this invariant property, the search space for maximum likelihood estimate could be constrained to equivalent classes of matrices satisfying a number of orthogonal relations, extending the results in reduced-rank analysis. Our results could be considered a multi-lag canonical-correlation-analysis. The method considered here provides a solution in the general case to the polynomial product regression model in \parencite{VeluReinselWichern}. We provide estimation examples with simulated data. We also explore how the estimates vary with different Jordan matrix configurations and discuss methods to select a configuration. Our approach could provide an important dimensional reduction technique with potential applications in time series analysis and linear system identification. In the appendix, we link the reduced configuration space of $\bG$ with a geometric object called a vector bundle.}
\section{Introduction}
Traditionally, the state-space approach to time series considers a representation:
$$\by_t = \bK \bv_t$$
$$\bv_{t} = \bC \bv_{t-1}+\bD \epsilon_t$$
Using the lag operator $L$, $\bv_t = (\bI-\bC L)^{-1}\bD\bep_t$ and thus 
\begin{equation}
\label{eq:state0}
\by_t = \bK(\bI-\bC L)^{-1}\bD\bep_t
\end{equation}
Let $\tilde{\bT}$ be a rational matrix function such that $\tilde{\bT}(\infty) = 0$ ($\tilde{\bT}$ is called strictly-proper in this case.) A realization is a representation of $\tilde{\bT}$ in the form
$$\tilde{\bT}(z) = \bK(z\bI  - \bC )^{-1}\bD$$
It is known \parencite{Gilbert, Kalman} a realization exists for all strictly-proper $\tilde{\bT}$. If $\by_t = \bT(L)\bep_t$ for a rational matrix $\bT(L)$ with $0$ is not a pole of $\bT$ ($\bT(0)$ is finite) then $\tilde{\bT}(L)= L^{-1}\bT(L^{-1})$ is strictly proper. Hence:
$$\bT(L) = L^{-1} \bK(L^{-1}\bI-\bC )^{-1}\bD = \bK(\bI-\bC L)^{-1}\bD$$
and so $\by$ can be represented in state-space form. ($\bT(0)= \bI$ in many models, structure models could have $\bT(0)\neq\bI$.)
We note, this traditional state-space realization (which we will call $\MA$-state-space realization) gives a representation of $\by$ in term of $\bep$. $\bC$ gives valuable information, for example its eigenvalues could determine stability of the process. As a moving average representation, it does not link $\by_t$ with its lagged values directly. We will take a different approach in this paper.

If $\bT(0)$ is invertible, we note $\bZ(s) = \bI - \bT(0)\bT(s^{-1})^{-1}$ is strictly proper as a function of $s$. We can apply the same realization theorem to express
$$\bZ(s) =\bH(s\bI-\bF)^{-1} \bG$$
for some $\{\bH, \bF, \bG\}$. With $s=L^{-1}$, this implies:
$$\bT(0)\bT(L)^{-1} = \bI - \bZ(L^{-1}) = \bI -\bH(L^{-1}\bI-\bF )^{-1}\bG = \bI - \bH(\bI -\bF L)^{-1}\bG L$$
And the model
$$\by_t = \bT(L)\bep_t$$
could be written as
$$\bT(L)^{-1}\by_t = \bep_t$$
or
$$\bT(0)\bT(L)^{-1}\by_t = (\bI - \bH(\bI - \bF L)^{-1}L)\by_t = \bT(0)\bep_t$$
$$\by_t = \bH(\bI -\bF L)^{-1}\bG L \by_t  + \bT(0)\bep_t$$
This is what we call the $autoregressive$ ($\AR$) state-space form. $\by$ could be forecast by its lagged values. This is an important feature we would like to explore in this paper. Consider the Vector Autoregressive ($\VAR$) model:
\begin{equation}
\label{eq:VAR}
\by_t = \sum_{i=1}^{p} \bPhi_i \by_{t-i} + \bep_t
\end{equation}
In this case, $\bT(L)^{-1} = \bI - \sum_{i=1}^{p} \bPhi_i L^{i}$. So $\bZ(s) = \sum_{i=1}^{p} \bPhi_i s^{-i}$. It is clear that $\bZ(s)$ is strictly proper. Moreover, it has only one pole of degree $p$ at $0$. Kalman described its minimal realization explicitly. We will see $\bF$ could be made a nilpotent Jordan matrix, and so could be classified by the shape of the Jordan blocks. For a Jordan form $\bF$ with $\bF^p=0$, this shows:
$$\bP(L) := \bZ(L^{-1}) = \sum_{i=1}^p \bPhi_i L^i = \sum_{i=1}^p (\bH\bF^{i-1} \bG)L^i$$
and the regression is
$$\by_t = \sum_{i=1}^p (\bH\bF^{i-1} \bG)L^i\by_{t-i} + \bep_t$$
Here, $\bF$ does not determine stability of $\by_t$, but $\by_t$ is explicitly expressed in its lagged values. This approach offers a number of crucial advantages. It turns out to be a generalization of the reduced-rank regression approach. In that case $\bF=0$, and we have $\by_t = \bH\bG\by_{t-1}+\bep_t$. We show that we can replicate most of the reduced-rank analysis here. Fixing $\bG$, $\bH$ could be computed by least square. The likelihood function could be expressed via the Schur determinant formula as a determinant ratio which could be considered a generalized Rayleigh quotient. This generalizes the classical result that reduced-rank $\VAR(1)$ models are related to generalized invariant subspace representations, and to the associated Rayleigh quotients. Therefore, maximizing the likelihood means minimizing a determinant ratio. The gradient and hessian of the likelihood function are very easy to compute, and could be used to estimated model parameters using standard optimizers in the examples we consider. However, similar to the reduced rank case, the likelihood function is unchanged if we replace $\bG$ by $\bS\bG$ if $\bS$ commutes with $\bF$. So it is possible to restrict the search space to a lower dimension set. In the reduced-rank case, using $QR$ factorization on $\beta'$ we can assume the rows of $\bG$ to be orthonormal. We have a similar situation in the minimal state-space case.

As the structure of $\bF$ could be classified by listing all Jordan forms with $\bF^p=0$, we have a very explicit and simple classification of possible realizations. The approach offers a systematic parameter reduction technique that we hope to compare and combine with other parameter estimation techniques.

Reduced-rank regressions could be defined for any two variables $\bx$ and $\by$, not only for an autoregressive $\by$. Our results are valid for a more general forecasting model with time lagged regressors, the $\VARX$ model. We restrict ourselves to consider $\VAR$ and $\VARX$ in this article. The more general case of $\VARMA$ will be considered in a future article.

We collect the symbols used and compares our minimal $\AR$-state-space approach with reduced-rank regression for the reader's convenience in \cref{tab:summarize}. The concepts and symbols will be introduced in subsequent sections.
\begin{table}[H]
\begin{tabular}{|p{3.5cm} | p{4.5cm}| p{7cm}|} 
 \hline
 Concept/Symbol &    Reduced-Rank & $\AR$-state-space  \\ [0.5ex] 
 \hline\hline
Dimension of $\bx$ &  $m$  &    $m$\\
Dimension of $\by$ &  $k$  &    $k$\\
$\min(m, k)$ & $h$ & $h$ \\
Lag & $p=1$ or not applicable & $p$\\ 
Structure params & reduced rank $\fll = d < m$ & $\hat{\Psi} = [d_1,\cdots,d_p], d_i \geq 0; d_p > 0; \sum d_i\leq h$ \\
Total rank alloc. & $\fll=d$ & $\fll = \sum d_i$ \\
Min. state-space dim. & $\fll=d$ & $\sum jd_j$ \\
Alt. struct. params & $\fll=d$ & $\Psi = [(r_g, l_g), \cdots,(r_1, l_1) )]_{r_g > \cdots > r_1, 0 < l_i = d_{r_i}}$ \\
Parameter reduction & $(m-d)(k-d)$ & $\sum_{i=1}^p(m-\sum_{j\geq i}d_j) \sum_{i=1}^p(k-\sum_{j\geq i}d_j)$\\
Jordan block & $\bJ(\lambda, r, l)$ & $\bJ(\lambda, r, l)$\\
$\lambda=0$ Jordan block & $\bK(r, l)$ & $\bK(r, l)$ \\
$\bF$ & $\bF= \bK(1, l) = 0_{d, d}$ & $\bF_{\Psi} = \oplus_{i=g}^1 \bK(r_i, l_i)$\\
Factorization & $\beta = \bH\bG$ & $\bP(L) = \bZ(L^{-1}) = \bH(\bI -\bF L)^{-1} \bG$\\
Minimal criteria & $\bG, \bH$ of size $(d, m), (k, d)$, rank $d$  & $\bG_{r, 0}, \bH_{r, 0}$ of size $(d_{r}, m), (k, d_r)$, rank $d_r$. $\bG_{:, 0} = (\bG_{r, 0})_r$ and $\bH_{:, 0} = (\bH_{r, 0})_r$ are of full row and column rank\\
$LQ$\slash Gram-Schmidt & $\bG\bG' = \bI_{\fll}$  & $\bG_{:, 0}\bG_{:, 0}' = \bI_{\fll}$; $\bG_{r, l} \bG_{r_1, 0}' = 0$ for $l>\max(0, r-r_1-1)$\\
Num. mat. ($\bA$) &$\bX\bX' -\bX\bY'(\bY\bY')^{-1}\bY\bX'$ & $\bXLag\bXLag' -\bXLag\bY'(\bY\bY')^{-1}\bY\bXLag'$\\
Denom. mat. ($\bB$) & $\bX\bX'$ & $\bXLag\bXLag'$ \\
$\kappa$ & $\kappa(\bG) = \bG$ & defined in \cref{eq:kappa} \\
Neg Log Likelhood &$\log(\det(\bG\bA\bG'))-\log(\det(\bG\bB\bG'))$ & $\log(\det(\kappa(\bG)\bA\kappa(\bG)')-\log(\det(\kappa(\bG)\bB\kappa(\bG)')$\\
Gradient & $2\Tr(\bG\bA\bG')^{-1}\bG\bA\eta'-2\Tr(\bG\bB\bG')^{-1}\bG\bB\eta'$ & $2\Tr(\kappa(\bG)\bA\kappa(\bG)')^{-1}\kappa(\bG)\bA\kappa(\eta)'-2\Tr(\kappa(\bG)\bB\kappa(\bG)')^{-1}\kappa(\bG)\bB\kappa(\eta)'$ \\
($\cH(\bA, \bG, \psi, \eta)$) & $2\Tr((\bG\bA\bG')^{-1}(\bG\bA\psi' + \psi\bA\bG') - (\bG\bA\bG')^{-1}\psi\bA\eta')$ &
$2\Tr((\kappa(\bG)\bA\kappa(\bG)')^{-1}(\kappa(\bG)\bA\kappa(\psi)' + \kappa(\psi)\bA\kappa(\bG)') - (\kappa(\bG)\bA\kappa(\bG)')^{-1}\kappa(\psi)\bA\kappa(\eta)')$ \\
Hessian & $\cH(\bA, \bG, \psi, \eta) - \cH(\bB, \bG, \psi, \eta)$ & $\cH(\bA, \bG, \psi, \eta) - \cH(\bB, \bG, \psi, \eta)$ \\
No. configs for $\bF$ & $h$ & $\begin{pmatrix}p+h-1\\ p\end{pmatrix}$\\
\hline
\end{tabular}
\caption{Main symbols and concepts.}
\label{tab:summarize}
\end{table}

To summarize, in this paper, we:
\begin{itemize}
\item Introduce a framework for dimensional reduction for $\VARX$ models under the concept of minimal $\AR$-state-space. We show in this case, minimal state-space could be classified explicitly in term of Jordan forms.
\item Compute the likelihood function for each configuration of Jordan form. It could be considered as a multi-lag canonical correlation analysis. While we can no longer maximize the likelihood via eigenvectors, its gradient and hessian are simple to compute, so we can apply standard optimization techniques.
\item Reduce the search domain for the likelihood function to a lower dimensional set, via a generalized $LQ$\slash Gram-Schmidt procedure. $\bG$ could be made to satisfy two sets of orthogonal relations. This allows us to apply manifold optimization techniques for large scale problems.
\end{itemize}
In the appendix, we show the reduced search domain could be considered a vector bundle on a flag manifold. This geometric concept is not essential to use our model but potentially will be helpful for higher dimension\slash autoregressive order.

The work \parencite{VeluReinselWichern} (which mentioned the model in \parencite{Brillinger:1969}) is probably a predecessor to this approach. It is related to the case $\bF= \bK(p, d_p)$ (Jordan block of one exponent). The author handled the case where $\bH_{p, j} = 0$ for all $j > 0$. We will discuss this model in more details in \cref{sec:examples}. We will show our approach can also be used for their model in the general case.

As we only consider $\AR$-state-space models in this article, we often drop the prefix $\AR$ when mentioning state-space models. We will use the wild card character {\it :} to replace running indices on (block) rows and columns of matrices.
\section{Review of reduced-rank regression and VAR(1) minimal realization.}
Reduced-rank regression was first studied in \parencite{Anderson}. It found applications in different areas of statistics, notably in time series. Johansen \parencite{Johansen} used it in his famous test of cointegration. Reduced-rank regression for time series has been studied by \parencite{VeluReinselWichern, AhnReinsel, Anderson1999, Anderson2002}. \parencite{BoxTiao} introduced canonical-correlation-analysis to time series. We review reduced-rank regression briefly here, in a less general framework but sufficient for the subsequent analysis. We study a model of form:
$$\by = \beta \bx + \bep$$
$\by$ is a $k$-dimensional random variable, $\bx$ is a $m$ dimension variable and $\beta$ is a $k\times m$ matrix of rank $d\leq \min(k, m)$. We can set $\beta = \bH\bG$ with $\bH$ of size $k\times d$ of rank $d$ and $\bG$ of size $d\times m$. Given sample matrices $\bY$ of size $k\times n$ and $\bX$ of size $m\times n$; for a fixed $\bG$, the optimal $\bH$ is obtained by:
$$\bH = \bY\bX^{\prime}\bG'(\bG\bX\bX'\bG')^{-1}$$
and the residual covariance matrix is
$$\cC(\bG)= \bY\bY' - \bY\bX'\bG'(\bG\bX\bX'\bG')^{-1}\bG\bX\bY'$$
Following \parencite{Johansen_book}, we apply the Schur determinant formula to the block matrix:
$$\begin{pmatrix}
\bY\bY' & \bY\bX'\bG'\\
\bG \bX \bY' & \bG \bX\bX' \bG'
\end{pmatrix}$$
We have:
$$\begin{aligned}\det(\bY \bY') \det(\bG \bX\bX' \bG' - \bG\bX\bY'(\bY\bY')^{-1}\bY\bX'\bG') =\\
\det(\bG \bX\bX' \bG')\det(\bY \bY' - \bY\bX'\bG'(\bG \bX\bX' \bG')^{-1}\bG \bX \bY')\end{aligned}$$
So to minimize $\det(\cC(\bG))$ (as a function of $\bG$) we need to minimize the ratio:
$$\cR(\bG) = \frac{\det(\bG [\bX\bX'  - \bX\bY'(\bY\bY')^{-1}\bY\bX']\bG')}{\det(\bG \bX\bX' \bG')}$$
or its logarithm, which has a simple gradient:
$$\nabla_{\eta}D\log(\cR) = 2\Tr((\bG\bA\bG')^{-1}\bG\bA\eta' - (\bG\bB\bG')^{-1}\bG\bB\eta')$$
Where $\bA = [\bX\bX'  - \bX\bY'(\bY\bY')^{-1}\bY\bX']$ and $\bB = \bX\bX'$. Here $\nabla_{\eta}$ is the directional derivative in the direction $\eta$. At a critical point we have
$$(\bG\bA\bG')^{-1}\bG\bA = (\bG\bB\bG')^{-1}\bG\bB$$
Therefore, $\bG\bA = \gamma \bG\bB$, where $\gamma =(\bG\bA\bG')(\bG\bB\bG')^{-1}$ is a matrix of size $d\times d$. So this is a generalized invariant subspace problem. Rewrite it as:
$$\bG\bB^{1/2}\bB^{-1/2}\bA\bB^{-1/2} = \gamma \bG\bB^{1/2}$$
This becomes an invariant subspace problem, where the new matrix is $\bB^{-1/2}\bA\bB^{-1/2}$ and $\bG\bB^{1/2}$ is the new variable. Alternatively, by comparing gradient it is well-known that this determinant ratio minimizing problem is equivalent to the trace ratio problem:
$$ \Tr((\bG [\bX\bX'-\bX\bY'(\bY\bY')^{-1}\bY\bX']\bG')(\bG \bX\bX' \bG')^{-1})$$
and this leads to the problem of maximizing 
$$\Tr((\bG [\bX\bY'(\bY\bY')^{-1}\bY\bX']\bG')(\bG \bX\bX' \bG')^{-1})$$
which brings us to canonical-correlation-analysis.

In the time series case, we have $\by=\by_t$ and $\bx = \by_{t-1}$. The corresponding regression is:
$$\by_t = \bPhi \by_{t-1} + \bep_t$$
This is the vector autoregressive model $\VAR(1)$, with $p=1$ as we only have one lag. If $\bPhi$ is of reduced-rank $d$: $\bPhi = \bH\bG$ as above, which can be written with lag operator symbol as
$$(\bI_k-\bPhi L)\by_t = \bep_t$$
Its transfer function, is $\bT(L)=(\bI-\bPhi L)^{-1}$, so as in the introduction $\bZ(s)=\bI-\bT(s^{-1})^{-1}=s^{-1}\bPhi $ has minimal state-space form:
$$\bZ(s) = \bH[s \bI_r ]^{-1}\bG$$
From our discussion so far, it is clear that the minimal $\AR$-state-space realization of the lag polynomial of $\VAR(1)$ model is exactly the reduced-rank regression model.

\section{Minimal state-space realization for VAR(2)}
Let us consider the case $p=2$.  In this case, $\bZ(s) = \bPhi_1 s^{-1} + \bPhi_2 s^{-2}$ for the regression:
$$\by_t = \bPhi_1 \by_{t-1}+ \bPhi_2 \by_{t-2} + \bep_t$$
With $s = L^{-1}$, we want to realize $\bP(L) = \bPhi_1 L + \bPhi_2 L^2$ in the form $\bH(L^{-1}\bI - \bF)^{-1}\bG = \bH(\bI - \bF L)^{-1}\bG L$. The later expression will need to be a polynomial in $L$. We note if $\bF$ is nilpotent with  $\bF^2=0$ then $(\bI-\bF L)^{-1} = \bI+\bF L$, therefore:
$$\bPhi_1 = \bH \bG$$
$$\bPhi_2 = \bH\bF\bG$$
Assume that is the case, we can assume further that $\bF$  is of Jordan form:
$$\bF = \begin{bmatrix}0_{l_2, l_2} & \bI_{l_2} & 0\\ & 0_{l_2, l_2} & 0\\ & &0_{l_1, l_1} \end{bmatrix}$$
We can divide $\bH$ and $\bG$ to corresponding blocks, $\bH = \begin{bmatrix}\bH_{2, 0} & \bH_{2, 1} &  \bH_{1, 0}\end{bmatrix}$, $\bG=\begin{bmatrix} \bG_{2, 1}\\ \bG_{2, 0}\\ \bG_{1, 0}\end{bmatrix}$. Expanding:
\begin{equation}
\label{eq:Phi2}
\begin{gathered}
\bPhi_1 = \bH_{1, 0} \bG_{1, 0} + \bH_{2, 0} \bG_{2,1} + \bH_{2, 1} \bG_{2, 0}\\
\bPhi_2 = \bH_{2, 0} \bG_{2, 0}
\end{gathered}
\end{equation}
Therefore, we have a realization if we can decompose $\bPhi_1$ and $\bPhi_2$ to this form. The $minimal$ requirement would put further restrictions: if $\bG_{2, 0}$ and $\bG_{1, 0}$ has zero rows, then we can just drop those rows and get a smaller state-space realization. So a starting condition is $\bG_{2, 0}$ and $\bG_{1, 0}$ should not have zero rows. The actual condition, specified by Kalman is that the rows of $\bG_{2, 0}$ and $\bG_{1, 0}$ are linearly independent, and so are the columns of $\bH_{2, 0}$ and $\bH_{1, 0}$. This puts a constraint $l_1 + l_2 \leq k$ where $k$ is the dimension of the vector $\by_t$. Further, he proved our guess is correct: $\bF$ needs to be a nilpotent matrix. The discussion here could be generalized to $\VAR(p)$ models and more generally to regression models with time lag structures as we can see in the next sections.

\section{A result of Kalman on rational matrix function}
The results in this section is purely algebraic involving matrix polynomial and rational functions. Its main result was discovered by Kalman in \parencite{Kalman}. Together with earlier results of Gilbert in \parencite{Gilbert} they give a complete picture of minimal state-space realization of all proper rational transfer functions. We will recall a few definitions but will mostly focus on Proposition 3 of \parencite{Kalman}, which is most relevant to our situation. As before, a rational matrix function $\bZ(s)$ is called strictly proper if $\lim_{s\to\infty}\bZ(s) = 0$. By factoring out the (scalar) least common denominator $\baq(s)$ we can write:
\[ \bZ(s) = \frac{1}{\baq(s)} \baP(s)\]
with degree of $\baP$ is less than degree of $\baq$. Gilbert addressed the case where $\baq$ has simple roots. In that case we can expand $\bZ$ by partial fraction:
\[ \bZ(s) = \sum_{i=1}^{g} \frac{\bH_i \bG_i}{s - \lambda_i} \]
with $\bH_i, \bG_i$ are of sizes $k\times d_i, d_i\times k$ and of rank $d_i$. Note we are assuming that $\baq(s)$ could be factored to monomials, hence $\bH_i, \bG_i$ and $\lambda_i$ could be complex numbers (but in the end $\bZ$ is real.) Gilbert showed that $\bZ$ admits a minimal state-space realization with minimal state $\sum_i d_i$ and constructed it as a direct sum of blocks of form $\bH_i(s \bI_{r_i}-\lambda_i)^{-1}\bG_i$. The case $g=1$ and $\lambda_i = 0$ is the $\VAR(1)$ case above, with $L=s^{-1}$ as usual.

Kalman addressed the case of roots with multiplicity. He showed in general the minimal state-space realization could be constructed as direct sum of realization for distinct roots, each could have multiplicity greater than $1$. For the case of one root his result could be summarized in the following proposition, which is a restatement of Proposition 3 of \parencite{Kalman} which works for matrices in $\cX = \R^n$ or $\C^n$ as this is purely an algebraic result. We note he used the term $irreducible$ instead of $minimal$ realization, which is the standard term today. For our application the root will be zero and all coefficients will turn out to be real.
\begin{proposition}
Let $\bZ(s) = \frac{1}{(s-\lambda)^p} \baP(s)$ with $\baP(s)$ is a polynomial matrix of degree less than $p$. Let $\bF$ be an $n\times n$ matrix with a single eigenvalue $\lambda$. We take a basis of $\bF$ in $\cX$ so that $\bF$ has the Jordan form:
$\bF = \diag(\bJ(\lambda, r_g, l_g), \cdots, \bJ(\lambda, r_1, l_1))$ with $r_g > r_1 > \cdots > r_1$. Where $\bJ(\lambda, r, l)$ is defined by:
$$\bJ(\lambda, r) =\begin{bmatrix}\lambda & 1&  & & & \\
 & \cdots & \cdots & & & \\
 & & \cdots & \cdots & &\\
 & &  & \cdots & &1\\
 0 & & & & & \lambda
\end{bmatrix}$$
$$\bJ(\lambda, r, l) = \bJ(\lambda, r) \otimes \bI_l$$
and $\bJ(\lambda, r)$ is of size $r$. Let $\bG, \bH$ be $n\times m, k\times n$ matrices expressed with respect to the same basis. Let $\fll = \sum l_i$. Then $\{\bH, \bF, \bG\}$ is a minimal state-space realization of $\bZ(s)$ if and only if both of the following conditions are satisfied:
\begin{enumerate}
\item The $\fll\times m$ matrix $\bG_{:, 0}$, which consists of rows $(r_g-1)l_g+1, \cdots, r_g l_g, r_g l_g + (r_{g-1}-1)l_{g-1}+1,\cdots, r_g l_g + r_{g-1}l_{g-1},
\cdots, \sum_2^{g}r_i l_i +(r_1-1)l_1+1 ,\cdots, \sum_1^{g}r_i l_i$ has rank $\fll$.
\item The $k\times \fll$ matrix $\bH_{:, 0}$, which consists of columns $1,\cdots, l_g,  r_g l_g+1, \cdots r_g l_g + l_{g-1},\cdots, \sum_2^{g}r_i l_i +1 ,\cdots, \sum_2^{g}r_i l_i + l_1$ has rank $\fll$.
\end{enumerate}
\end{proposition}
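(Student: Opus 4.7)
The plan is to deduce this from the classical characterization of minimal state-space realizations as those that are simultaneously controllable and observable (itself a theorem of Kalman), and then translate controllability and observability into the concrete rank statements in the proposition by applying the Popov--Belevitch--Hautus (PBH) eigenvector test to the explicit Jordan structure of $\bF$.

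First I would reduce to the case $\lambda = 0$ by the substitution $s \mapsto s + \lambda$, $\bF \mapsto \bF - \lambda\bI$. This sends the pole of $\bZ$ to the origin, preserves the Jordan pattern, and leaves both rank conditions unchanged, so we may assume $\bF$ is nilpotent with $\bF^{r_g} = 0$. Next, I would invoke Kalman's theorem that $\{\bH, \bF, \bG\}$ is minimal if and only if $(\bF, \bG)$ is controllable and $(\bH, \bF)$ is observable, together with the PBH test: $(\bF, \bG)$ is controllable iff every nonzero left eigenvector $w$ of $\bF$ satisfies $w^{T}\bG \neq 0$, and dually $(\bH, \bF)$ is observable iff every nonzero right eigenvector $v$ of $\bF$ satisfies $\bH v \neq 0$. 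Since $0$ is the only eigenvalue of $\bF$, it is enough to test PBH at $\mu = 0$.

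The core computation is then to read off the left and right eigenspaces of $\bF = \diag(\bJ(0, r_g, l_g), \ldots, \bJ(0, r_1, l_1))$. A single block $\bJ(0, r)$ has its left eigenspace spanned by $e_r^{T}$ and its right eigenspace by $e_1$; after tensoring with $\bI_{l_i}$ to form $\bJ(0, r_i) \otimes \bI_{l_i}$, the left eigenvectors are $e_{r_i} \otimes e_j$ and the right eigenvectors are $e_1 \otimes e_j$ for $j = 1, \ldots, l_i$, so each block contributes $l_i$ independent left and $l_i$ independent right eigenvectors, for a total of $\fll = \sum l_i$ in each case. Passing to the global coordinates of $\bF$, the row indices occupied by these left eigenvectors are exactly the list defining $\bG_{:,0}$ in condition (1), and the column indices for the right eigenvectors are exactly the list defining $\bH_{:,0}$ in condition (2). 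The PBH condition ``$w^{T}\bG \neq 0$ for every nonzero left eigenvector $w$'' then becomes ``the rows of $\bG_{:,0}$ are linearly independent,'' i.e., $\bG_{:,0}$ has rank $\fll$; symmetrically, observability becomes rank $\bH_{:,0} = \fll$.

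The main obstacle is bookkeeping: one must verify that the explicit row-index pattern $(r_g - 1)l_g + 1, \ldots, r_g l_g,\, r_g l_g + (r_{g-1} - 1)l_{g-1} + 1, \ldots$ really does pick out the standard basis vectors corresponding to the $\fll$ independent left eigenvectors of $\bF$, and similarly for the column pattern selecting the top $l_i$ columns of each Jordan sub-block. Once the tensor product and block-diagonal indexings are reconciled with the explicit list in the statement, and the dimension count $\fll = \dim\ker \bF^{T} = \dim\ker \bF$ is checked, the equivalence with PBH is mechanical, and the two rank conditions holding together is exactly equivalent to minimality.
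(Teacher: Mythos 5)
Your argument is correct, but it follows a genuinely different route from the one the paper relies on. The paper does not prove this proposition at all: it is quoted as Proposition 3 of Kalman's paper, whose own derivation goes through the Smith--McMillan form of $\baP(s)/(s-\lambda)^p$ and reads the realization (and the rank conditions on $\bG_{:,0}$, $\bH_{:,0}$) off that normal form; this is why the paper emphasizes that the proposition is ``a translation between the Smith--McMillan form and the state-space form,'' and it also yields the constructive algorithm and the identification of the minimal dimension with the McMillan degree. You instead take the classical criterion that a realization is minimal iff it is controllable and observable, and apply the PBH eigenvector test at the sole eigenvalue: since the left (resp.\ right) eigenspace of $\bJ(\lambda,r)\otimes\bI_l$ is spanned by $e_r\otimes e_j$ (resp.\ $e_1\otimes e_j$), the PBH conditions collapse exactly to ``no nontrivial combination of the rows of $\bG_{:,0}$ vanishes'' and dually for the columns of $\bH_{:,0}$, i.e.\ the two rank-$\fll$ conditions. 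That is a valid and in fact cleaner modern proof; it buys brevity and transparency, while Kalman's Smith-form route buys the explicit construction of $\{\bH,\bF,\bG\}$ from $\bZ(s)$ and the degree bookkeeping ($r_g=p$, $\nmin=\sum r_il_i=\delta_M$) that the paper uses later. Two small points you should make explicit: the equivalence is of course conditional on $\{\bH,\bF,\bG\}$ actually realizing $\bZ(s)$ (the rank conditions alone say nothing about $\bH(s\bI-\bF)^{-1}\bG=\bZ(s)$), and the invoked ingredients (minimal $\Leftrightarrow$ controllable and observable, plus the PBH test) are themselves nontrivial theorems, so your proof is a reduction to standard results rather than a from-scratch argument --- which is perfectly acceptable here and not circular, since those results are independent of the proposition being proved.
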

We have mostly preserved Kalman's notations, the notable differences are:
\begin{enumerate}
\item Replacing his $p$ with $k$ so not to confuse with the degree of the $\VAR$ model.
\item Use $g$ for the number of terms distinct $r$'s, as $q$ may be confused with moving average order.
\item Grouping the blocks with the same $r$ together and order the blocks in descending order of $r$'s. This is to conform with the order of the McMillan denominator. We will see the block with exponent $r$ will contribute to $r$ coefficients $\bPhi_{1},\cdots, \bPhi_r$, and the reduction in the rank of the $\bG_{p, 0}$ block contribute the most to reduction of overall parameters. So in a sense it is an order of importance.
\end{enumerate}
We note the dimension of the minimal realization is
$$\nmin= \sum r_i l_i = \delta_M$$
$\delta_M$ is defined in term of the Smith normal form. If a minimal state-space realization is given, $\bZ(s)$ is recovered by expanding the terms. Conversely, given $\bZ(s)$, Kalman gave an algorithm to recover $\{\bH, \bF, \bG\}$. The algorithm is based on representing $\baP(s)$ in Smith normal form then expand the terms to Taylor series of terms $s-\lambda$ and read the coefficients off from the representation. Kalman's proposition is essentially a translation between the Smith-McMillan form and the state-space form, a point carried out to all base fields by the work of \parencite{ItoWimmer}.

Our interest is in the case $\lambda = 0$. We will use the notation $\bK(r, l) = \bJ(0, r, l)$ going forward. Consider again
$$\bZ(s) = \bPhi_1 s^{-1} +\cdots \bPhi_p s^{-p} = \frac{1}{s^p}\sum_{i=1}^p \bPhi_{p-i} s^{i}$$
with $\bPhi_p \neq 0$, let $\bP(L)=\bZ(L^{-1})$:
$$\bP(L) = \bPhi_1 L +\cdots \bPhi_pL^p$$
Applying the proposition for this case:
$$\bZ(s) = \bH(s \bI -\bF)^{-1}\bG = \bH(\bI-\bF s^{-1})^{-1}s^{-1}\bG$$
and observe since $\lambda=0$, $\bF$ is a $nilpotent$ Jordan matrix: $\bF^p = 0$. Therefore we could rewrite the state-space realization in term of $L$:
\begin{equation}
\bP(L) = \bZ(L^{-1}) = \bH (\bI - \bF L)^{-1}\bG L = \bH\bG L + \bH\bF\bG L + \cdots \bH\bF^{p-1}\bG L^p
\end{equation}
or
$$\bPhi_i = \bH\bF^{i-1}\bG$$
From the fact $\bF^{r_g} = 0$ and $\bPhi_p \neq 0$ this implies $r_g = p$.

\section{Detailed description of the minimal realization.}
Let us now go deeper to the structure of $\bG$ and $\bH$. Since $\bF$ is composed of $g$ blocks $\bK(r_i, l_i)$, $\bG$ and $\bH$ could be decomposed to the corresponding $g$ row or column blocks respectively. The block corresponding to $r_i$ is of size $r_i l_i$ and it has $r_i$ sub-blocks, each of $equal$ size $l_i$. We call $r$ the exponent of the Jordan block $\bK(r, l)$ and $l$ the sub-rank.

We index the sub-blocks corresponding to an exponent $r$ of  $\bG$ in descending order: $\bG_{r, r-1}, \cdots,  \bG_{r, 0}$. The corresponding sub-blocks in $\bH$ are in ascending order, $\bH_{r, 0}, \cdots, \bH_{r, r-1}$. We will see this more explicitly in the next section. The somewhat mysterious indexing has origin from the correspondence between the Smith normal form and state-space realization. With this double indexing convention, it is clear $\bG_{:, 0}$ above is just the collection of all $\bG_{r_i, 0}$, and the assumption is the rows of $\bG_{:, 0}$ are linearly independent of rank $\fll = \sum l_i \leq \min(k, m)$. We have a similar observation for the sub-blocks of $\bH$. 

Let $h = \min(k, m)$. Instead of describing the Jordan blocks by pairs $(r_i, l_i)$ with $l_i > 0$, it is sometime more convenient to allow zero Jordan blocks. To summarize:
\begin{itemize}
\item For a $\VAR$ model the possible choices of $\bF$ could be classified by Jordan matrices such that $\bF^{p-1}\neq 0; \bF^p=0$. In other words, it could be classified by a list of tuples $\Psi = [(r_g, l_g), \cdots, (r_1, l_1)]$ with $l_i> 0, r_g = p, r_g > r_{g-1}> \cdots > r_1$, together with the rank constraint: $\fll = \sum l_i \leq h$. The corresponding Jordan matrix is $\bF = \bF_{\Psi}=\oplus_{i=g}^{i=1} \bK(r_i, l_i)$.
\item An alternative classification is by nonnegative integer value vector $\hat{\Psi} = [d_1,\cdots, d_p]$ with $d_p > 0$, $0\leq d_i$ and $\fll = \sum d_p \leq h$. $\Psi$ is obtained from $\hat{\Psi}$ as the list of tuples $[(i, d_i) | d_i > 0]$ in reversed order of $i$. Conversely, we can obtain $\hat{\Psi}$ from $\Psi$ by patching $d_i=0$ for the exponents $i$ not in $\Psi$. We call $\Psi$ and $\hat{\Psi}$ {\it structure parameters}.
\item To be consistent with the convention of the McMillan denominator, we will write the Jordan blocks in descending order of exponent.
\item For each $1 \leq i \leq p$, if $d_i > 0$, a Jordan block is defined by its exponent $i$ and its sub-rank $d_i$. $\bK(i, d_i)= \bJ(0, i, d_i)$ is of size $d_ii$. We will skip blocks with $d_i=0$.
\item The minimal state-space dimension, which is the dimension of $\bF$ in a minimal realization is $\nmin=\sum_{i=1}^p d_ii = \sum_{j=g}^{1} r_j l_j$. It is equal to the McMillan degree.
\item For a given tuple $(k, m, p)$, and $h=\min(k, m)$, the highest possible minimal state-space dimension is $ph$ and corresponds to $\Psi = [(p, h)]$. The lowest possible dimension is $p$, corresponds to $\Psi=[(p, 1)]$.
\item The number of possible configurations of $\Psi$ with maximal degree $p$ is $\begin{pmatrix} h + p-1 \\ p \end{pmatrix}$. This follows for a balls-and-urns computation. A more straight forward application of balls-and-urns is the number of configurations with $\fll = \sum d_i \leq h$ and degree {\it not exceeding} $p$: in addition to the urns $1\cdots p$, we add an urn corresponding to unused dimension $d_0 = h -\fll$. This is an $h$ balls, $p+1$ urns problem with number of configurations $\begin{pmatrix} h + p \\ p \end{pmatrix}$. Our count for the maximal degree is exactly $p$ comes from 
$$\begin{pmatrix} h + p -1  \\ p \end{pmatrix} = \begin{pmatrix} h + p \\ p \end{pmatrix} - \begin{pmatrix} h + p -1\\ p-1 \end{pmatrix}$$
In the code, we include a function to list all possible $\Psi$ per a pair $(h, p)$.
\item From here the number of configurations growths polynomially (of degree $p$) in $m$. There are $ph$ possible minimal state-space dimensions, and the distribution of number of $\Psi$'s per state-space dimension is a bell-shape. The analysis suggests that for large $m$ and $p$, iteration through the set of all possible $\Psi$ is not practical. We will discuss estimation in the next section.
\end{itemize}

As an example, for $p =3$, $\bP(L)=\sum \Phi_i L^i$ is represented as:
\[  L\begin{bmatrix}\bH_{[3, :]} & \bH_{[2, :]} & \bH_{[1, :]} \end{bmatrix} \begin{bmatrix}\bI - \bK(3, d_3)L &  & \\
 & \bI - \bK(2, d_2)L & \\
 &  & \bI -\bK(1, d_1)L\\
\end{bmatrix}^{-1}
\begin{bmatrix}\bG_{[3, :]} \\ \bG_{[2, :]} \\ \bG_{[1, :]} \end{bmatrix}
\]
As before we can use the nilpotency of $\bK(r, l)$:
$$(\bI- \bK(r, l)L)^{-1} = \bI + \sum_{i=1}^{r-1} \bK(r, l)^iL^i$$
We note for the block $\bK(r, l)^i$ is the matrix with the $i^{th}$ offset upper block diagonal is $\bI_l$, and other entries are zero. Therefore, the contribution of that block is
\begin{equation}
\begin{bmatrix}\bH_{r, 0}, \cdots \bH_{r, r-1}\end{bmatrix}\begin{bmatrix}
\bI_l L & \bI_l L^2 & \bI_l L^3\cdots  &\cdots & &\bI_l L^r \\
 & \cdots & \cdots & & \bI_l L^{r-1} & \\
 & & \cdots & \cdots & &\vdots\\
 & &  & \cdots & &\bI_l L^2\\
 0 & & & & & \bI_lL
\end{bmatrix}\begin{bmatrix} \bG_{r, r-1} \\ \vdots \\ \bG_{r, 0}\end{bmatrix}
\end{equation}
 So its contribution to $\bPhi_i = \bH\bF^{i-1} \bG$ is:
 \[
\sum_{a=0}^{r-i} \bH_{r, a} \bG_{r, p-i-a}
\]
And so:
\begin{equation}
\label{eq:Phi}
\bPhi_i = \sum_{j\geq i} \sum_{a=0}^{j-i} \bH_{j,a} \bG_{j, j-i-a}
\end{equation}
It is now simple to recover the formula for the cases $p=1$ and $p=2$ in previous sections. While the set up looks more involved to remember we have the following rules:
\begin{itemize}
\item $\bH_{j_1, a}\bG_{j_2, b}$ only appears if $j_1 = j_2$. So only terms of form $\bH_{j, a}\bG_{j, b}$ are present. There are no terms linking distinct Jordan blocks.
\item $\bH_{j, a}\bG_{j, b}$ contributes to $\bPhi_{j - a - b}$
\end{itemize}

In the next section we will describe the associated regression model, as well as estimations.

\section{VARX and least square estimates.}
We will describe a model of the form:
\begin{equation}
\label{eq:VARX}
\by_t = \bPhi_1 \bx_{t-1} + \bPhi_2 \bx_{t-2} \cdots +\bPhi_p \bx_{t-p} + \bep_t
\end{equation}
We allow $\by_{t-i}$ to be part of $\bx_{t-i}$, as discussed in \parencite{VeluReinselWichern}. The classical reduced-rank regression would be a special case of \cref{eq:VARX} with $p=1$.

Let us now turn to the estimation problem. Assuming we have $T+p$ samples of data, we organize the data in to matrices $\bY_f$ of size $k\times (T+p)$ and $\bX_f$ of size $m\times (T+p)$. Let $\bY$ be the submatrix of $\bY_f$ of size $k\times T$ skipping the first $p$ (columns) samples. Define $L^i\bX$ to be the submatrix of $\bX_f$ of size $m\times T$ skipping the last $i$ samples and the first $p-i$ samples. As we will not be using the first $p$ samples of $\bY_f$ as well as the last sample of $\bX_f$, they are allowed to be null. In the situation where we have autoregression it may be advantageous to share storage of $\bX_f$ and $\bY_f$. However we will consider $\bX_f$ and $\bY_f$ to be in separate matrices  here to simplify the notations. We now look at the problem of estimating $\bPhi_i$ such that the minimal state-space realization of $\bZ(s) = \bPhi_1 s^{-1} + \bPhi_2 s^{-2} \cdots +\bPhi_p s^{-p}$ corresponds to $\hat{\Psi} = \{d_1, \cdots d_p\}$. 

As with classical regressions, the maximum likelihood estimate with Gaussian noise of the parameters $\bH, \bG, \Omega$
will have the form:
$$-\frac{T}{2}\log(\det(\Omega)) - \frac{1}{2}\sum^T(\by_t - \sum\bPhi_i \bx_{t-i})'\Omega^{-1} (\by_t - \sum\bPhi_i \bx_{t-i})$$
where $\bPhi_i$ is given by \cref{eq:Phi}. We arrive at the condition:
$$\Omega = \frac{1}{T}(\bY - \sum\bPhi_i L^i\bX)(\bY - \sum\bPhi_i \bX)'$$

Since $\bF$ is known from the specification of $\hat{\Psi}$, similar to the reduced-rank case
we will need to estimate $\bH$ and $\bG$. Before we proceed with the general case let us go through the case $p=2$. From \cref{eq:Phi2} we have:
$$\by_t = (\bH_{1, 0} \bG_{1, 0} + \bH_{2, 0} \bG_{2,1} + \bH_{2, 1} \bG_{2, 0}) \bx_{t-1} + \bH_{2, 0} \bG_{2, 0} \bx_{t-2} + \bep_t$$
Similar to the $\VAR(1)$ case, to find a least square estimate by minimizing the determinant of the covariance matrix of $\epsilon_t$, we fix $\bG$ and find the optimized $\bH$. With the regressor:
$$\begin{bmatrix}\bG_{2, 1} L\bX + \bG_{2, 0}L^2\bX \\ \bG_{2, 0}L\bX \\  \bG_{1, 0} L\bX \end{bmatrix}$$
We can write it as $\kappa(\bG) \Xlag$, where:
$$\Xlag = \begin{bmatrix}L^2\bX \\ L\bX \end{bmatrix}$$
$$\kappa(\bG) = \begin{bmatrix}\bG_{2, 0} & \bG_{2, 1}   \\  & \bG_{2, 0}
 \\  & \bG_{1, 0} \end{bmatrix}$$
And note that we will write the exponents in descending order as is the case of the McMillan denominator. We see:
$$\bH(\bG) = \begin{bmatrix}\bH_{2, 0} & \bH_{2, 1} & \bH_{1, 0} \end{bmatrix} = \bY\Xlag'\kappa(\bG)'(\kappa(\bG)\Xlag\Xlag'\kappa(\bG)')^{-1}$$
So we need to minimize the determinant of residual covariance matrix:
$$\det(\bY\bY' - \bY\Xlag'\kappa(\bG)'(\kappa(\bG)\Xlag\Xlag'\kappa(\bG)')^{-1}\kappa(G)\Xlag \bY )$$
Again, using the Schur complement trick we need to minimize:
$$\cR(G) =  \frac{\det(\kappa(\bG) [\Xlag \Xlag'  - \Xlag \bY'(\bY\bY')^{-1}\bY\Xlag']\kappa(\bG)')}{\det(\kappa(\bG) \Xlag\Xlag' \kappa(\bG)')}$$
Write its logarithm as  $\log(\det(\kappa(\bG)\bA\kappa(\bG)' -\log(\det(\kappa(\bG)\bB\kappa(\bG)'))$ with $\bA = [\Xlag \Xlag'  - \Xlag \bY'(\bY\bY')^{-1}\bY\Xlag']$ and $\bB = \Xlag\Xlag'$ as before. The logarithm has a simple gradient by Jacobi's formula:
$$\nabla_{\eta}{\log(\det(\kappa (\bG)\bA\kappa(\bG)'))} = 2\Tr((\kappa (\bG)\bA\kappa(\bG)')^{-1} \kappa (\bG)\bA\kappa(\eta)'$$
where $\eta$ is a matrix in the shape of $\bG$ to specify the direction for the directional derivative $\nabla_{\eta}$.
$$\nabla_{\eta}\log(\cR(\bG)) = \Tr((\kappa (\bG)\bA\kappa(\bG)')^{-1} \kappa (\bG)\bA\kappa(\eta)'
- \Tr((\kappa (\bG)\bB\kappa(\bG)')^{-1} \kappa (\bG)\bB\kappa(\eta)' $$
for all $\eta$. So far, we can generalize the steps of the reduced-rank regression in the introduction. From here, the situation divergences. As $\kappa$ must be represented as a tensor, we do not have a matrix equation for $\bG$. However, since we know the gradient (and the hessian is also easy to compute) we can use a hessian-based optimizer. Later on we will see this is an optimization problem with the underlying function is invariant under a large group of matrix operations. We may use manifold optimization techniques for faster convergence. To conclude the section we note the analysis thus far generalizes to higher $p$:

\begin{theorem}
\label{th:likelihood}
Assume the minimal state-space realization of $\sum_{i=1}^p \bPhi_i L^{i}$ is represented by a nilpotent Jordan matrix consisting of blocks $\bK(p, d_p), \cdots,\bK(1,d_1)$. Let $\nmin = \sum d_ii$. For a fix $\bG$:
$$\bG = \begin{bmatrix}\bG_{p, p-1}\\ \vdots \\ \bG_{p, 0} \\ \vdots \\
\bG_{2, 1} \\ \bG_{2, 0} \\
\bG_{1, 0}
\end{bmatrix}$$
We define $\kappa(\bG)$ to be the block matrix with row blocks indexed by $(r, l)$ with $r$ arranged in descending order, $l$ arranged in in {\it ascending} order ($0\leq l\leq r-1$), and column blocks ordered from $p$ to $1$:
\begin{equation}
\label{eq:kappa2}
\kappa(\bG)_{(r, l), i} = \left\{\begin{array}{l} \bG_{r, r-l-i}\text{ if }i \leq r-l\\
0 \text{ otherwise.}
\end{array}
\right.
\end{equation}
In other words, the row blocks from $\kappa(\bG)_{r, r-1}$ to $\kappa(\bG)_{r, 0}$ has the right most column filled by $\bG_{r, r-1}$ to $\bG_{r, 0}$ from the top down, then the blocks are propagated up diagonally.
\begin{equation}
\label{eq:kappa}
\kappa(\bG) = \begin{bmatrix}
\bG_{p, 0} & \bG_{p, 1} & \vdots & \vdots & \bG_{p, p-2} & \bG_{p, p-1}\\
0 & \bG_{p, 0} & \bG_{p, 1} & \vdots & \vdots & \bG_{p, p-1}\\
 \vdots & \vdots & \vdots & \vdots & \vdots & \vdots \\
  0 & \cdots & \cdots & \cdots & 0 & \bG_{p, 0}\\
  \vdots & \vdots & \vdots & \vdots & \vdots & \vdots \\
  0 & \cdots & \cdots & 0 & \bG_{2, 0} & \bG_{2, 1}   \\
 0 & \cdots & \cdots & \cdots & 0 & \bG_{2, 0}\\
   0 & \cdots & \cdots & \cdots & 0 & \bG_{1, 0}\\
\end{bmatrix}
\begin{matrix}(p, 0)\\ (p, 1)\\ \vdots\\ (p, p-1)\\ \vdots \\ (2, 0) \\ (2, 1) \\ (1, 0)\end{matrix}
\end{equation}
Set 
\begin{equation} \Xlag = \begin{bmatrix}L^p\bX \\ \vdots \\ L\bX \end{bmatrix}
\end{equation}
Then the optimal $\bH$ to minimize the determinant of the residual covariance matrix is given by:
\begin{equation}
\begin{gathered}
\bH(\bG) = \begin{bmatrix}\bH_{p, 0}\cdots \bH_{p, p-1} & \cdots  & \bH_{2, 0} & \bH_{2, 1} & \bH_{1, 0} \end{bmatrix} = \\ \bY\Xlag'\kappa(\bG)'(\kappa(\bG)\Xlag\Xlag'\kappa(\bG)')^{-1}
\end{gathered}
\end{equation}
and the residual determinant is:
\begin{equation}
\label{eq:resi_covar}
\det(\bY\bY' - \bY\Xlag'\kappa(\bG)'(\kappa(\bG)\Xlag\Xlag'\kappa(\bG)')^{-1}\kappa(\bG)\Xlag \bY )
\end{equation}
$\kappa(\bG)$ is of full row rank if $\bG_{:, 0}$ is of full row rank. Minimizing \cref{eq:resi_covar} is equivalent to minimizing:
\begin{equation}
\label{eq:ratio_det}
\cR(\bG) = \frac{\det(\kappa(\bG) [\Xlag \Xlag'  - \Xlag \bY'(\bY\bY')^{-1}\bY\Xlag']\kappa(\bG)')}{\det(\kappa(\bG) \Xlag\Xlag' \kappa(\bG)')}
\end{equation}
which has the $\log$-gradient:
\begin{equation}
\begin{gathered}
 2\Tr((\kappa (\bG)\bA\kappa(\bG)')^{-1} \kappa (\bG)\bA \kappa(\eta)') \\
- 2\Tr((\kappa (\bG)\bB\kappa(\bG)')^{-1} \kappa (\bG)\kappa(\eta)')
\end{gathered}
\end{equation}
with $\bA =\Xlag \Xlag'  - \Xlag \bY'(\bY\bY')^{-1}\bY\Xlag'$ and $\bB = \Xlag\Xlag'$.
Define $\cH(\bA, \bG, \psi, \eta)$ to be
\begin{equation} 
2\Tr((\kappa(\bG)\bA\kappa(\bG)')^{-1}(\kappa(\bG)\bA\kappa(\psi)' + \kappa(\psi)\bA\kappa(\bG)') - (\kappa(\bG)\bA\kappa(\bG)')^{-1}\kappa(\psi)\bA\kappa(\eta)')
\end{equation}
Then its Hessian is $\cH(\bA, \bG, \psi, \eta) - \cH(\bB, \bG, \psi, \eta)$.
\end{theorem}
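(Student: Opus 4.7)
The key observation driving the whole statement is that, if we stack the coefficient matrices $\bH_{j,a}$ into a single row-block vector $\bH = [\bH_{p,0},\ldots,\bH_{p,p-1},\ldots,\bH_{2,0},\bH_{2,1},\bH_{1,0}]$ arranged to match the row-block order of $\kappa(\bG)$, then the full regression collapses in sample form to the single ordinary least-squares model
\[
\bY \;=\; \bH\,\kappa(\bG)\,\Xlag \;+\; \bep.
\]
My first step is to prove this identity directly by index chasing. One expands $\bH\,\kappa(\bG)\,\Xlag$ column by column: the $i$-th column block of $\kappa(\bG)$ is paired with the $L^{p+1-i}\bX$ block of $\Xlag$, and using the definition \eqref{eq:kappa} one reads off that the total coefficient of $L^i\bX$ is exactly $\sum_{j\geq i}\sum_{a=0}^{j-i}\bH_{j,a}\bG_{j,\,j-i-a}$, which by \eqref{eq:Phi} is $\bPhi_i$. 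This is the only delicate bookkeeping in the proof; the shape of $\kappa$ is designed precisely so the diagonal-stripe pattern of $\bG_{r,\bullet}$ blocks reproduces the Kalman sum.

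Once the model is in this form, the formulas for $\bH(\bG)$ and the residual determinant follow by the same OLS/Schur manipulation already carried out in the reduced-rank review of Section~2, with the substitutions $\bG\mapsto\kappa(\bG)$ and $\bX\mapsto\Xlag$. Concretely, for fixed $\bG$ the model is linear in $\bH$, so normal equations give the stated $\bH(\bG)$; substituting back yields \eqref{eq:resi_covar}; and Schur's determinant identity applied to
\[
\begin{pmatrix} \bY\bY' & \bY\Xlag'\kappa(\bG)' \\ \kappa(\bG)\Xlag\bY' & \kappa(\bG)\Xlag\Xlag'\kappa(\bG)' \end{pmatrix}
\]
turns the residual determinant (up to the $\bG$-independent factor $\det(\bY\bY')$) into $\cR(\bG)$ of \eqref{eq:ratio_det}.

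The full row rank claim I would prove by sweeping columns of $\kappa(\bG)$ from left to right. If $v=(v_{r,l})$ is a left null vector, the leftmost nonzero column block contains only $\bG_{r,0}$ terms; full row rank of $\bG_{:,0}$ kills the corresponding components of $v$. Moving one column block to the right produces an equation whose nontrivial entries are again assembled from $\bG_{r,0}$ blocks acting on the remaining components of $v$, and induction finishes the job. This uses only that $\bG_{:,0}$, as a stacking of all $\bG_{r,0}$, has independent rows, which in particular forces each individual $\bG_{r,0}$ to be of full row rank.

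Finally, the gradient and Hessian follow from Jacobi's formula combined with the crucial (and trivial to verify) fact that $\kappa$ is linear in $\bG$, so $\kappa(\bG+t\eta)=\kappa(\bG)+t\kappa(\eta)$ and $\kappa(\bG+t\eta+s\psi)=\kappa(\bG)+t\kappa(\eta)+s\kappa(\psi)$. The chain rule applied to $\log\det(\kappa(\bG)\bA\kappa(\bG)')$ yields the stated single-direction derivative; subtracting the analogous $\bB$ term gives the log-gradient of $\cR$. Differentiating this gradient a second time in direction $\psi$, using the product rule on $(\kappa(\bG)\bA\kappa(\bG)')^{-1}$ and on $\kappa(\bG)\bA\kappa(\eta)'$, produces $\cH(\bA,\bG,\psi,\eta)$, and the Hessian of $\log\cR$ is $\cH(\bA,\bG,\psi,\eta)-\cH(\bB,\bG,\psi,\eta)$. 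The principal obstacle is entirely in the first step: once the single identity $\sum\bPhi_i L^i\bX=\bH\kappa(\bG)\Xlag$ is verified, the remainder of the theorem is a textbook adaptation of the reduced-rank analysis.
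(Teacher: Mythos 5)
Your proposal is correct and follows essentially the same route as the paper: verify that the regressor for $\bH$ is $\kappa(\bG)\Xlag$ via \cref{eq:Phi} and \cref{eq:kappa2}, prove full row rank of $\kappa(\bG)$ by a left-null-vector induction exploiting the block-triangular stripe structure and the row independence of $\bG_{:,0}$, and then reduce everything else to the reduced-rank/OLS/Schur computation together with Jacobi's formula and the linearity of $\kappa$ for the gradient and Hessian. The only difference is cosmetic (you sweep column blocks left to right where the paper inducts on $l$, and you spell out the ``routine'' gradient/Hessian steps), so no gap to report.
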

\begin{proof}
The proof is a generalization of the case $p=2$. We note as before, row blocks indices of $\kappa$ are ordered in order of $\bH$, so the row block indices are $(r, 0), \cdots (r, r-1)$; in opposite with convention for $\bG$. The columns are indexed from $p$ to $1$ as they correspond to $\Xlag$. First, we need to prove the regressor to $\bH$ is $\kappa(\bG)\Xlag$. From \cref{eq:Phi}, the block of regressor corresponding to $\bH_{r, l}$ is $\sum_i \bG_{r, r-l-i}L^{i}\bX$, but this is the $(r, l)$ row block of $\kappa(\bG)\Xlag$ by \cref{eq:kappa2}. Next, we need to show $\kappa$ is of full row rank. If $v\kappa(\bG) = 0$ then the columns of $v$ corresponding to blocks $(r, 0)$ are zeros, as rows of $\bG_{:, 0}$ are linearly independent. From the block triangular shape of $\kappa$ we can show $v_{r, l}=0$ inductively in $l$. Hence, $\kappa(\bG)\bU \kappa(\bG)'$ does not have zero as an eigenvalue if $\bU$ is positive definite. Therefore, $\kappa(\bG)\bU \kappa(\bG)'$ is also positive definite, so the determinant ratio and its logarithm in the theorem are well-defined. The remaining calculations are routine.
\end{proof}
In a sense, our determinant ratio likelihood could be considered as a multiple lag version of canonical-correlation-analysis, as the gradient equation reduces to the same calculation in the $p=1$ case.
\section{Equivalence of state-space realizations.}
\label{sec:equivalence}
In the simple reduced-rank case, \cref{eq:ratio_det} is unchanged when $\bG$ is replaced by $\bS\bG$ for any invertible matrix $\bS$. In our general framework, a similar result holds for an invertible matrix $\bS$ such that $\bS\bF = \bF\bS$. We will describe such matrices, and show we can normalize $\bG$ to a form satisfying certain orthogonal relations. This section is more on linear algebra and geometry, readers can skip the section if they are not interested in the details of dimensional reduction for estimation algorithms. The main result to take away is the parameter count of $\bS$, which implies the parameter reduction count for the model. On the other hand, it is not difficult to work out all the details of the section in the case $m=2$ or $m=3$ manually, and that would probably give readers more intuition on parameter reduction.

As pointed out by Kalman in the same paper, the realizations $\{\bH, \bF, \bG\}$ and $\{\bH\bS^{-1}, \bS\bF\bS^{-1}, \bS\bG\}$ are equivalent. If $\bS$ commutes with $\bF$, then it is equivalent to $\{ \bH\bS^{-1}, \bF, \bS\bG\}$. With our least square estimates for a given $\bG$,  this implies the models defined by $\bG$ and $\bS\bG$ are equivalent. Let $\cS = Centr(\bF)$ the set (which is a $group$)  of all invertible matrices $\bS$ of size $\nmin\times \nmin$ such that $\bS\bF = \bF\bS$. We show that we can transform $\bG$ to normalized forms by applying an element of $\cS$, similar to the Gram-Schmidt process. In particular, we can make the rows of $\bG_{:, 0}$ orthonormal, similar to the classical Rayleigh quotient case. Since there are a few concepts to introduce, it is helpful to examine the case $p=2$ explicitly. In this case,  $\bG = \begin{bmatrix}\bG_{21} \\ \bG_{20} \\ \bG_{10}\end{bmatrix}$ and $\bF$ will have the form:
$$\bF = \begin{bmatrix}
0_{l_2} & \bI_{l_2} &  \\
  & 0_{l_2} & \\
 & & 0_{l_1}
 \end{bmatrix}$$
so for $\bS$ to commute with $\bF$, it needs to have the form:
$$
\begin{gathered}
\bS = \begin{bmatrix}
 \bS_{21, 21} & \bS_{21, 20} &  \bS_{21, 10} \\
& \bS_{20, 20} &  \\
   & \bS_{10, 20} & \bS_{10, 10} \\
 \end{bmatrix}\\
\bS_{21, 21} = \bS_{20, 20}
\end{gathered}
 $$
Each diagonal block is invertible, but there is no restriction on the off-diagonal blocks. Since the combined matrix $\begin{bmatrix} \bG_{2, 0} \\ \bG_{1, 0}\end{bmatrix}$ is of full row rank, we can make it orthonormal using an $LQ$ factorization (thin $QR$ on its transpose). The end results are matrices $\bS_{10, 10}$, $\bS_{10, 20}$ and $\bS_{20, 20}$, so that we have $(\bS\bG)_{1,0}(\bS\bG)_{1,0}' = \bI_{d_1}$, $(\bS\bG)_{2,0}(\bS\bG)_{2,0}' = \bI_{d_2}$ and $(\bS\bG)_{1, 0} (\bS\bG)_{2, 0}' = 0$. Therefore we assume after this step we have $\bG_{r, 0}\bG_{r_1, 0}' = \delta_{r, r_1}\bI_{r, r_1}$. We claim that we can choose the $\bS_{21, 20}, \bS_{21, 10}$ blocks of $\bS$ to make $(\bS\bG)_{2,1}(\bS\bG)_{10}' = 0$, $(\bS\bG)_{2,1}(\bS\bG)_{2,0}' = 0$. We have
$$(\bS\bG)_{2, 1} = \bS_{21, 10}\bG_{1, 0} + \bS_{21, 21}\bG_{2, 1} + \bS_{21, 20}\bG_{2, 0}$$
Note that $\bS_{21, 21} = \bS_{20, 20}$ is already defined in the first step. To make $(\bS\bG)_{2, 1}$ orthogonal to $\bG_{1, 0}$ and $\bG_{2, 0}$ we only need to set:
$$\begin{gathered}
\bS_{21, 10} = -\bS_{21, 21}\bG_{2, 1}\bG_{1, 0}' \\
\bS_{21, 20} = -\bS_{21, 21}\bG_{2, 1}\bG_{2, 0}'
\end{gathered}$$
This is our generalized $LQ$\slash Gram-Schmidt for $p=2$. While we cannot make $\bG$ fully orthogonal like the case $p=1$, this helps reduce the search space. Recall $h = \min(k, m)$ and $\fll = d_1 + d_2 \leq h$.
Assume we have chosen $\bG$ such that we have $\bG_{:, 0} \bG_{:, 0}' = \bI_{\fll}$ and $\bG_{:, 0} \bG_{2, 1}' = 0$. Completing $\bG_{:, 0}$ to an orthonormal basis by adding $m -\fll$ row vectors organized in a matrix $\bG_{\perp}$, then we can express $\bG_{2, 1}$ in that basis:
$$\bG_{2, 1} = \bC\Gperp$$
where $\bC$ is a matrix of size $(d_2, h-\fll)$. So far we have showed that for $p=2$ the choice of minimal state representation is the same as the choice of $d_1, d_2$ such that $d_1 + d_2 \leq m$, and $\bG$ could be normalized to an orthogonal form. This form could be represented by a pair $(\bC, \bO)$ where $\bO= \begin{bmatrix} \bG^o_{2, 0} \\ \bG^o_{1, 0}\\ \Gperp \end{bmatrix} $ is an orthonormal basis of $\R^m$ and $\bC\in \Mat(d_2, h-\fll)$.

We note if $\bQ_{20, 20}, \bQ_{10, 10}$ are orthogonal square matrices having $h-\fll, d_2, d_1$ rows respectively, then the block diagonal matrix $\bQ=\diag(\bQ_{20, 20}, \bQ_{20, 20}, \bQ_{10, 10})$ commutes with $\bF$. Hence
$(\bC, \begin{bmatrix}  \bG_{2, 0}\\ \bG_{1, 0}\\  \Gperp\end{bmatrix})$ and $(\bQ_{20, 20} \bC, \begin{bmatrix} \bQ_{20, 20}\bG_{2, 0}\\ \bQ_{10, 10}\bG_{1, 0}\\ \Gperp\end{bmatrix})$ represent $\bG$ and $\bQ\bG$, respectively. The generalized Rayleigh functional
$$\cR(\bG, \bA, \bB) = \frac{\det(\kappa(\bG)\bA\kappa(\bG)')}{\det(\kappa(\bG)\bB\kappa(\bG)')} $$
is invariant under multiplication of $\bG$ by $\bQ$. Also if we replace $\Gperp$ by $\bQ_{\perp}\Gperp$ and $\bC$ by $\bC\bQ_{\perp}'$ we get the same matrix $\bG^o$.

To illustrate, let us consider the case $k=m=2$ and $p=2$. Consider the case $d_2 = d_1 = 1$. In this case $\fll = 2$ and $\nmin= 3$ and $\Gperp$ is empty. An orthogonal matrix could be parameterized under the form $\begin{bmatrix}\cos t & \sin t\\ -\sin t & \cos t\end{bmatrix} = \begin{bmatrix} v \\ w\end{bmatrix}$. We can take $\bG = \begin{bmatrix}0 \\ v \\ w\end{bmatrix}$ and $\kappa(\bG) = \begin{bmatrix}v & 0 \\ 0& v \\0 & w\end{bmatrix}$. The numerator of the generalized Rayleigh quotient will be:
\[\det(\begin{bmatrix} vL^2 \bX \bA(L^2\bX)'v' & vL^2 \bX \bA(L\bX)'v' & vL^2 \bX \bA(L^1\bX)'w' \\
 vL \bX \bA(L^2\bX)'v' & vL \bX \bA(L\bX)'v' & vL \bX \bA(L\bX)'w' \\
  wL \bX \bA(L^2\bX)'v' & wL \bX \bA(L\bX)'v' & wL \bX \bA(L\bX)'w'
\end{bmatrix})\]
and the denominator is of the same form with $\bA$ is replaced by $\bB$. In effect we have an optimization problem on the circle, where the function to optimize is a rational function of high degree in $\sin$ and $\cos$.

The following proposition is rather technical but necessary for the exact parameter count of $\bS$ for the general case. The main point to remember is we can slide diagonally entries in a combined block to a wall. Within a combined block, the sub-blocks are equal if they are on the same (not necessarily principal) diagonal, and we only need to define $\bS$ on certain entries of vertical walls corresponding to $(\rho, 0)$ as in the case $p=2$.
\begin{proposition}
Let $\cS=Centr(\bF)$ be the set of all invertible $\nmin\times \nmin$ matrices commuting with $\bF$. We can index the blocks of $\bS\in \cS$ by $\bS_{\rho_1, j_1; \rho_2, j_2}$ for $1\leq \rho_i \leq p$ and $0 \leq j_i \leq \rho_i-1$. $\bS_{\rho_1, j_1; \rho_2, j_2}$  maps the block $\bG_{\rho_2, j_2}$ to block $(\bS\bG)_{\rho_1, j_1}$. $\bS_{\rho_1, j_1; \rho_2, j_2}$ is of size $d_{\rho_1}\times d_{\rho_2}$. We have the following characterization of $\bS$:
\begin{equation}
\label{eq:Sdiagonal}
\bS_{\rho_1, j_1; \rho_2, j_2} = \bS_{\rho_1, j_1+1; \rho_2, j_2+1} \text{ if } \rho_1 -1 > j_1 \geq 0, \rho_2 -1 > j_2 \geq 0
\end{equation}
\begin{equation}
\label{eq:Srelations}
\begin{gathered}
\bS_{\rho, 0; \rho, 0} \text{ is invertible.} \\
\bS_{\rho_1, j_1; \rho_2, \rho_2-1} = 0 \text{ if } \rho_1-1> j_1 \geq 0 \\
\bS_{\rho_1, 0; \rho_2, j_2} = 0 \text{ if } 0 < j_2 < \rho_2 \\
\end{gathered}
\end{equation}
As a consequence:
\begin{equation}
\label{eq:Szero}
\begin{gathered}
\bS_{\rho, \rho-1; \rho_2, j_2} = 0 \text{ if }j_2 > \rho-1 \\
\bS_{\rho_1, j_1; \rho, 0} = 0 \text{ if } \rho_1 -\rho > j_1\\
\bS_{\rho_1, 0; \rho_2, 0} = 0 \text{ if } \rho_1 > \rho_2
\end{gathered}
\end{equation}
The following blocks uniquely define $\bS$:
\begin{equation}
\label{eq:Sdefined}
\bS_{\rho_1, j; \rho, 0} \text{ with } j \geq \rho -\rho_1 
\end{equation}
Given a collection of blocks as in \cref{eq:Sdefined}, such that $\bS_{\rho, 0; \rho, 0}$ are invertible, we can construct a unique $\bS\in\cS$ using \cref{eq:Sdiagonal}. In particular, the number of parameters of $\bS$ is
\begin{equation}
\label{eq:Sdim}
\sum_{\rho_1,\rho}\min(\rho_1, \rho) d_{\rho_1}d_{\rho} = \sum_{i=1}^p(\sum_{j\geq i} d_j)^2
\end{equation}
To summarize, to define $\bS$ we only need to define the $(:, :, \rho, 0)$ vertical walls, and on those walls, for each $\rho_1$, $j$ could take values from $\max(0,\rho-\rho_1)$ to $\rho_1-1$. The remaining cells of $\bS$ are either zero, or could be filled by \cref{eq:Sdiagonal}. Finally:
\begin{equation}\kappa(\bS\bG) =\bS\kappa(\bG)
\end{equation}

\end{proposition}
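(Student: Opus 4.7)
The overall strategy is to translate the single matrix equation $\bS\bF = \bF\bS$ into block-level equations and then harvest the structural constraints one by one. Writing $\bF$ as the direct sum of nilpotent shifts $\bK(\rho, d_\rho)$, left-multiplication by $\bF$ sends the block $\bS_{\rho_1, j_1; \rho_2, j_2}$ to $\bS_{\rho_1, j_1-1; \rho_2, j_2}$ (with zero for $j_1 = 0$), while right-multiplication by $\bF$ sends it to $\bS_{\rho_1, j_1; \rho_2, j_2+1}$ (with zero for $j_2 = \rho_2 - 1$). Equating these, the interior case $j_1 \geq 1$, $j_2 \leq \rho_2 - 2$ produces the diagonal-sliding identity \cref{eq:Sdiagonal}; the case $j_1 = 0$, $j_2 \leq \rho_2 - 2$ forces $\bS_{\rho_1, 0; \rho_2, j_2+1} = 0$, which is the second half of \cref{eq:Srelations}; and the case $j_1 \geq 1$, $j_2 = \rho_2 - 1$ forces $\bS_{\rho_1, j_1-1; \rho_2, \rho_2-1} = 0$ for $j_1 < \rho_1$, giving the first half. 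Invertibility of the diagonal wall blocks $\bS_{\rho, 0; \rho, 0}$ then follows from invertibility of $\bS$ together with the block-triangular structure forced by these constraints.

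Each consequence in \cref{eq:Szero} is obtained by iterating the sliding identity until one of the two vanishing rules fires. For instance, $\bS_{\rho, \rho-1; \rho_2, j_2}$ with $j_2 > \rho - 1$ slides diagonally backward $\rho - 1$ steps into $\bS_{\rho, 0; \rho_2, j_2 - \rho + 1}$, whose second subscript lies strictly between $0$ and $\rho_2$, so the second rule of \cref{eq:Srelations} kills it. For $\bS_{\rho_1, j_1; \rho, 0}$ with $j_1 < \rho_1 - \rho$, sliding forward runs into the top-right zero rule before the first subscript can reach $\rho_1 - 1$, so the first rule of \cref{eq:Srelations} applies. The third consequence is the $j_1 = 0$ specialization. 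To see that the blocks in \cref{eq:Sdefined} uniquely determine $\bS$, I would take any block $\bS_{\rho_1, j_1; \rho_2, j_2}$ and slide it diagonally backward until some subscript hits zero: if $j_2$ hits first, we land on a wall block $\bS_{\rho_1, j_1 - j_2; \rho, 0}$ whose index must satisfy the admissibility inequality (else the diagonal would already have hit a vanishing rule, forcing the original block to zero); otherwise $j_1$ hits first and the landing block is zero by the second rule of \cref{eq:Srelations} or by \cref{eq:Szero}. Conversely, prescribing $\bS$ on the free blocks and extending by diagonal sliding (with zero elsewhere) gives a matrix that commutes with $\bF$, verified block by block.

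The parameter count is $\sum_{\rho_1, \rho} \min(\rho_1, \rho)\, d_{\rho_1} d_\rho$, since the admissible range for $j$ on the panel $(\rho_1, \rho)$ runs from $\max(0, \rho_1 - \rho)$ to $\rho_1 - 1$, contributing $\min(\rho_1, \rho)$ free blocks of size $d_{\rho_1} \times d_\rho$; the rewriting as $\sum_{i=1}^p (\sum_{j \geq i} d_j)^2$ follows from $\min(a, b) = |\{i : 1 \leq i \leq \min(a, b)\}|$ and an exchange of summation order. Finally, $\kappa(\bS\bG) = \bS\kappa(\bG)$ is checked by observing that the $(r, l)$-th row block of $\kappa(\bG)$ is, modulo a fixed within-block reversal of the sub-block index, the $(r, r-1-l)$-th row block of the augmented matrix $[\bG, \bF\bG, \ldots, \bF^{p-1}\bG]$; since $\bS$ commutes with every power of $\bF$, it commutes with this augmented construction, and the identity follows after lining up indices. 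The main bookkeeping obstacle here is that the row indexing of $\bG$ (descending in $j$) differs from that of $\kappa(\bG)$ (ascending in $l$), so one must be careful about how $\bS$ is being interpreted on each side of the identity.
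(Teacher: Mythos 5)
Your argument is correct and, for the core of the proposition, follows the same route as the paper: translate $\bS\bF=\bF\bS$ into the block identities $(\bF\bS)_{\rho_1,j_1;\rho_2,j_2}=\bS_{\rho_1,j_1-1;\rho_2,j_2}$ (zero when $j_1=0$) and $(\bS\bF)_{\rho_1,j_1;\rho_2,j_2}=\bS_{\rho_1,j_1;\rho_2,j_2+1}$ (zero when $j_2=\rho_2-1$), read off \cref{eq:Sdiagonal} and \cref{eq:Srelations}, slide to the walls for \cref{eq:Szero} and the uniqueness claim, and count. Three remarks. First, your verification of $\kappa(\bS\bG)=\bS\kappa(\bG)$ via the augmented matrix $[\bG,\bF\bG,\dots,\bF^{p-1}\bG]$ and commutation with powers of $\bF$ is a cleaner repackaging of the paper's direct index substitution; the within-panel reversal you flag is exactly the re-indexing the paper carries out explicitly (its $\bS_{r,r-l-1;\,r_2,r_2-l_2-1}$ bookkeeping), so once that reversal is written down your argument closes in the same way. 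Second, your admissibility range $j\ge\max(0,\rho_1-\rho)$ on the wall $(\cdot\,;\rho,0)$ is the one consistent with the second line of \cref{eq:Szero} and with the count $\min(\rho_1,\rho)\,d_{\rho_1}d_{\rho}$ (for instance $\bS_{1,0;2,0}$ must be a free block in the $p=2$ example); the inequality $j\ge\rho-\rho_1$ displayed in \cref{eq:Sdefined} has the two exponents transposed, so keep your version rather than matching the printed statement. Third, your one-line appeal to the block-triangular structure for the invertibility assertions is right and can be made precise exactly as you intend: ordering the block index $(\rho,j)$ first by $\rho-j$ and then by $\rho$, the vanishing rules exhibit $\bS$ as block triangular with diagonal blocks equal to copies of the $\bS_{\rho,0;\rho,0}$, whence $\det\bS=\prod_{\rho}\det(\bS_{\rho,0;\rho,0})^{\rho}$; this yields both that invertibility of $\bS$ forces the diagonal wall blocks to be invertible and that the reconstructed $\bS$ in your converse step is itself invertible, a point the paper's proof leaves implicit as well.
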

\begin{proof}
Note that $\bF_{\rho_1, j_1; \rho_2, j_2}$ is zero, unless $\rho_1 = \rho_2$ and $j_1=j_2+1$. So $\bS\bF = \bF\bS$ implies:
$$(\bF\bS)_{\rho_1, j_1;\rho_2, j_2}=\begin{aligned}\bS_{\rho_1, j_1-1; \rho_2, j_2}\text{ if } j_1 > 0\\0 \text{ if } j_1 = 0\end{aligned}$$
$$(\bS\bF)_{\rho_1, j_1;\rho_2, j_2}= \begin{aligned}\bS_{\rho_1, j_1; \rho_2, j_2+1} \text{ if } j_2 < \rho_2-1\\ 0 \text{ if } j_2 = \rho_2 - 1\end{aligned}$$
From here \cref{eq:Sdiagonal} and \cref{eq:Srelations} follow.

For fixed $(\rho_1, \rho_2)$, consider the rectangular combined block:
$$\bS_{\rho_1, :; \rho_2, :} := (\bS_{\rho_1, j_1; \rho_2, j_2})_{\rho_1 > j_1 \geq 0; \rho_2 > j_2 \geq 0}$$
It has four walls corresponding to rows $\bS_{\rho_1, 0;\rho_2, :}$, $\bS_{\rho_1, \rho_1-1;\rho_2, :}$ and columns $\bS_{\rho_1, :;\rho_2, 0}$, $\bS_{\rho_1, :;\rho_2, \rho_2-1}$. By \cref{eq:Sdiagonal}, $\bS_{\rho_1, j_1; \rho_2, j_2}$ is defined if the surrounding walls are defined. From \cref{eq:Srelations}, the vertical wall $(:, :, \rho_2, \rho_2-1)$ is zero, except for $\bS_{\rho_1, \rho_1-1, \rho_2, \rho_2-1}$, and the horizontal wall $(\rho_1, 0, :, :)$ is zero, except for $\bS_{\rho_1, 0, \rho_2, 0}$. So $\bS$ is defined by the diagonal blocks and the $(\rho, \rho-1; :, :)$ horizontal walls as well as the $(:, :, \rho, 0)$ vertical walls. We have \cref{eq:Szero} by using \cref{eq:Sdiagonal} to move these matrix entries to another wall using \cref{eq:Sdiagonal}, then apply \cref{eq:Srelations}.

The first equation of \cref{eq:Szero} shows the only non-zero entries on the $(\rho, \rho-1)$ horizontal wall are those with $j_2 \leq \rho-1$, but then they are equal to $\bS_{\rho, \rho-1 -j_2; \rho_2, 0}$. So the entries of the vertical walls $(\rho, 0)$ alone are sufficient to define $\bS$. Finally, using the second equality of \cref{eq:Szero}, we have the restriction on $j$ in \cref{eq:Sdefined}.

We count the number of $j$'s for a given ordered pair of $\rho_1, \rho$ to be $\min(\rho, \rho_1)$, while the number of parameters for each such block is $d_{\rho}d_{\rho_1}$. From here the parameter count for $\bS$ follows.
The relationship between $\bS\kappa(\bG)$ and $\kappa(\bS\bG)$ could be verified by direct substitution. For $i \leq r -l$:
$$\kappa(\bS\bG)_{(r,l), i} = (\bS\bG)_{r, r-l-i}=\sum \bS_{r, r-l-i; r_3, l_3}\bG_{r_3, l_3}$$
Notice that row blocks $(r, l)$ of $\kappa$ are indexed in ascending order in $l$ while $\bS$ is ordered in descending order in $l$, so we need:
$$\sum_{r_2, l_2; i \leq r_2 - l_2} \bS_{r, r-l-1; r_2, r_2-l_2-1}\kappa(\bG)_{(r_2, l_2), i}= \sum_{r_2, l_2; i \leq r_2 - l_2} \bS_{r, r-l-1; r_2, r_2-l_2-1}\bG_{r_2, r_2-l_2-i}$$
With the change of variable $r_2 = r_3, l_3 = r_2-l_2-i$, the right-hand side becomes:
$$\sum_{r_3, l_3} \bS_{r, r-l-1; r_3, l_3+i-1}\bG_{r_3, l_3}$$
and then we apply $\bS_{r, r-l-1; r_3, l_3+i-1} = \bS_{r, r-l-i; r_3, l_3}$. We need to pay some attention to show the various constraints on indices carry through, but we will leave that to the reader.
\end{proof}

The generalized Gram-Schmidt (or $LQ$) algorithm is described next. It is purely a linear algebra result, we are not sure if it is already known.
\begin{proposition}
\label{prop:GGS}
For any block matrix $\bG\in \Mat(\nmin, m)$ such that $\bG_{:, 0}$ is of full row rank, we can construct an invertible matrix $\bS\in \cS=Centr(\bF)$ such that $\bG^{o} = \bS \bG$ satisfies
\begin{equation}
\label{eq:Orthonormal}
\bG^{o}_{:, 0} \bG^{o'}_{:, 0}  = \bI_{\fll}
\end{equation}
\begin{equation}
\label{eq:Orthogonal2}
\bG^{o}_{\rho, l} \bG^{o'}_{\rho_1, 0} = 0 \text{ for } l > \max(0, \rho-\rho_1-1)
\end{equation}
$(\bS_{\rho_1, 0; \rho_2, 0})_{\rho_1, \rho_2}$ could be chosen to be any block lower triangular matrix ($\bS_{\rho_1, 0; \rho_2, 0} = 0$ if $\rho_1 > \rho_2$) so that \cref{eq:Orthonormal} is satisfied, that is:
$$ (\bS_{\rho_1, 0; \rho_2, 0})_{\rho_1, \rho_2}(\bG_{\rho_2, 0})_{\rho_2} (\bG_{\rho_2, 0})_{\rho_2}' (\bS_{\rho_1, 0; \rho_2, 0})_{\rho_1, \rho_2}'  = \bI_{\fll}$$
Once $(\bS_{\rho_1, 0; \rho_2; 0})_{\rho_1, \rho_2}$ is chosen, $\bS$ is uniquely determined by \cref{eq:Orthogonal2}.
\end{proposition}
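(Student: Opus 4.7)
The plan is to construct $\bS$ in two stages, directly generalizing the $p = 2$ construction worked out in the paragraphs preceding the proposition. First, I pick the block-diagonal portion $\tilde{\bS} := (\bS_{\rho_1, 0; \rho_2, 0})_{\rho_1, \rho_2}$ by a block Cholesky / $LQ$ factorization to achieve \cref{eq:Orthonormal}. Second, proceeding by induction on $l = 1, \ldots, p-1$, I fill in the remaining defining blocks $\bS_{\rho, l; \rho_3, 0}$ via explicit Gram--Schmidt style formulas derived from the sliding rule \cref{eq:Sdiagonal} and the orthonormality just established.

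For the first stage, the previous proposition forces $\tilde{\bS}_{\rho_1, \rho_2} = 0$ for $\rho_1 > \rho_2$ and requires the diagonal blocks $\bS_{\rho, 0; \rho, 0}$ to be invertible. Since $\bG_{:, 0}$ has full row rank, $\bG_{:, 0}\bG_{:, 0}'$ is positive definite and admits a block lower triangular Cholesky factor $\bL$; setting $\tilde{\bS} = \bL^{-1}$, or more generally $\bQ\bL^{-1}$ for any block-diagonal orthogonal $\bQ$, yields $\tilde{\bS}\bG_{:, 0}\bG_{:, 0}' \tilde{\bS}' = \bI_{\fll}$. Because \cref{eq:Srelations} forces $\bS_{r, 0; \rho_2, l_2} = 0$ for $l_2 > 0$, the blocks $(\bS\bG)_{r, 0}$ depend only on $\tilde{\bS}$, so the orthonormality achieved here is preserved no matter how the stage-2 blocks are chosen later. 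I therefore relabel $\bS\bG$ as $\bG$ and may assume in what follows that $\bG_{r, 0}\bG_{r_1, 0}' = \delta_{r, r_1}\bI$.

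For the second stage, at induction step $l$ and for each $\rho > l$, I use \cref{eq:Sdiagonal} and \cref{eq:Srelations} to slide every $\bS_{\rho, l; \rho_3, l_3}$ onto the vertical wall $(\cdot, \cdot; \rho_3, 0)$, obtaining the expansion
\[
(\bS\bG)_{\rho, l} \;=\; A'_{\rho, l} \;+\; \sum_{\rho_3 \geq \max(1, \rho - l)} \bS_{\rho, l; \rho_3, 0}\, \bG_{\rho_3, 0},
\]
where $A'_{\rho, l}$ is assembled from $\bG$ and from blocks $\bS_{\rho, j; \rho_3, 0}$ with $0 \leq j < l$, all of which have been determined by stage 1 or the earlier induction steps. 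Taking the inner product with $\bG_{\rho_1, 0}$ and using orthonormality, the constraint \cref{eq:Orthogonal2} reduces to the explicit assignment
\[
\bS_{\rho, l; \rho_1, 0} \;=\; -A'_{\rho, l}\, \bG_{\rho_1, 0}' \qquad \text{for } \rho_1 \in \{\max(1, \rho - l), \ldots, p\}.
\]
Because \cref{eq:Szero} forces $\bS_{\rho, l; \rho_3, 0} = 0$ for $\rho_3 < \rho - l$, the free parameters at this slice are in exact one-to-one correspondence with the orthogonality constraints, so the assignment is both well-posed and unique. The main technical difficulty is the bookkeeping behind the expansion of $(\bS\bG)_{\rho, l}$ and the verification of this index matching; once that is in place, the Gram--Schmidt step and the induction are routine, and the uniqueness of $\bS$ given $\tilde{\bS}$ follows from the uniqueness of the assignment at each $(\rho, l)$.
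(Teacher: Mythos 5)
Your two-stage construction is correct and is essentially the paper's own proof: the paper likewise obtains the $l=0$ wall from an $LQ$ factorization of $\bG_{:,0}$ and then back-solves for the remaining wall-$0$ blocks by induction on the sub-index, using the sliding rule and the forced zeros so that the unknowns at each slice match the orthogonality constraints one-to-one. Your only deviation is cosmetic: you renormalize $\bG$ first so that each step becomes the explicit assignment $\bS_{\rho,l;\rho_1,0}=-A'_{\rho,l}\bG_{\rho_1,0}'$, whereas the paper solves the same triangular system against the original $\bG$, whose invertible coefficient matrix is a submatrix of $\bL$.
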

\begin{proof}
Applying the usual $QR$\slash Gram-Schmidt to $\bG'_{:, 0}$, then transpose, we get the $LQ$ factorization of $\bG_{:, 0} = \bL \bW_0$ with $\bW_0$ satisfies $\bW_0\bW_0' = \bI_{\fll}$ and $\bL$ is lower-triangular, so we can write $\bW_0 = \bL^{-1}\bG_{:, 0}$. We take 
$(\bS_{\rho_1, 0; \rho_2, 0})_{\rho_1, \rho_2}= \bL^{-1}$, which is lower-triangular, and $(\bG^o_{\rho_, 0})_{\rho }=\bW_0$.

From here, for each $\rho$ we get the diagonal blocks $\bS_{\rho, \rho-1; \rho; \rho-1} =\cdots = \bS_{\rho, 0; \rho, 0}$ such that $\bG^o_{\rho, 0} = \bS_{\rho, 0}\bG_{\rho, 0}$ satisfies: $\bG^{o}_{\rho, 0}\bG^{o'}_{\rho, 0} = I_{l_{\rho}}$ and $\bG^{o}_{\rho,0}\bG^{o'}_{\rho_1, 0}= 0$ for $\rho_1 \neq \rho$.
We note the block matrix:
$$(\bG_{\rho, 0}\bG^{o'}_{\rho_1, 0})_{{p\geq \rho\geq 1; p \geq \rho_1 \geq 1}}=\bL$$
is invertible. The remaining constraints on $\bS$, for fixed $\rho, j, \rho_1$, with $j> 0$ and $j\geq \rho - \rho_1$ are:
\begin{equation}
\label{eq:Ssolve}
\sum_{\rho_2, j_2} \bS_{\rho, j; \rho_2, j_2}\bG_{\rho_2, j_2}\bG^{o'}_{\rho_1, 0} = 0
\end{equation}
We need to show that we can solve for $\bS_{\rho, j; \rho_2, j_2}$ uniquely from the above equations. We will back solve in $j$. The case $j=0$ is already done, as by \cref{eq:Srelations} $\bS_{\rho, 0; \rho_2, j_2} = 0$ if $j_2 > 0$. With fix $j$, we try to solve \cref{eq:Ssolve} for each $\rho$'s blocks. If $j_2 >0$, by \cref{eq:Sdiagonal}, $\bS_{\rho, j; \rho_2, j_2} = \bS_{\rho, j-1; \rho_2, j_2-1}$ has been solved from the previous step. So we only need to solve for the case $j_2=0$, that is for $\bS_{\rho, j; \rho_2, 0}$. The constraint of $\bS$ forces $\bS_{\rho, j;\rho_2, 0} = 0$ unless $p\geq \rho_2\geq \rho-j$. Therefore we have only $p-\rho +j + 1$ block variables, corresponding to $p - \rho + j +1$ equations in \cref{eq:Ssolve}. The coefficient matrix in this case is $(\bG_{\rho_2, 0}\bG^{o'}_{\rho_1, 0})_{p\geq \rho_2 \geq \max(1, \rho-j); p \geq \rho_1 \geq \max(1, \rho-j)}$, which is invertible, and the invert is
$(\bS_{\rho, 0;\rho_1, 0})_{\rho, \rho_1}$ skipping the last $\max(1, \rho-j)-1$ rows and column blocks. So the solution exists and is unique. From \cref{eq:Sdefined}, $\bS$ is uniquely defined once we have $\bS_{\rho, j; \rho_2, 0}$.
\end{proof}
We implement this algorithm in function {\it LQ\_multi\_lag} in the package. With $\Psi$ and $\bG$ as inputs, it returns the factors $\bS$ and $\bW$ such that $\bG=\bS^{-1}\bW$, $\bS$ commutes with $\bF_{\Psi}$ and $\bW$ satisfies the orthogonal relations of \cref{eq:Orthonormal}, \cref{eq:Orthogonal2}. As an example for $\Psi=[(3, 1), (1,1)]$, we have $\bF=\bK(3,1)\oplus(\bK(1,1)$. With 
$$\bG = \begin{bmatrix}
-2 &  5 \\
 3 & -2 \\
 1 &  8 \\
1 & -2 \\
\end{bmatrix}
$$
the function found $\bG^o =\bS \bG$ with:
$$\bG^o = \begin{bmatrix}
 0. &  0.0 \\
-0.3969112 &  0.049614 \\
-0.1240347 & -0.992278 \\
-0.9922779 &  0.124035 \\
\end{bmatrix}; \bS = \begin{bmatrix}
-0.124035 & -0.024807 &  0.022326 & -0.195975 \\
0.000000 & -0.124035 & -0.024807 &  0.000000 \\
0.000000 &  0.000000 & -0.124035 &  0.000000 \\
0.000000 &  0.000000 & -0.186052 & -0.806226 \\
\end{bmatrix}
$$
The points to note are $\bG^o_{3, 0}$ and $\bG^o_{1, 0}$ are of norm $1$, $\bG^o_{3,2}=0$ and  $\bG^o_{3,1} = c \bG^o_{1, 0}$ ($c=0.4$ in this case). These are consequences of the orthogonal relations. So, while $\bG$ has total dimension $8$, $\bG^o$ just needs two parameters, one for the pair of orthogonal vectors, and one for the proportional constant between $\bG^o_{3,1}$ and $\bG^o_{1, 0}$.
The next proposition clarifies further the parameterization of $\bG^o$:
\begin{proposition}
\label{prop:Gparam}
Let $\cG$ be the set of matrices $\bG$'s of size $\nmin\times m$ such that $\bG_{:, 0}$ is of full row rank. Let $\cGO$ be the subset of $\cG$ consisting of all matrices $\bG^o$ satisfying the constraints in \cref{prop:GGS}. An element $\bQ$ in $\cS=Centr(\bF)$ maps $\cGO$ to $\cGO$ if and only if $\bQ$ is block diagonal with the diagonal blocks invertible and satisfies:
\begin{equation}
\label{eq:Q_format}
\begin{gathered}
\bQ_{r, 0; r, 0} = \bQ_{r, 1; r, 1} = \cdots = \bQ_{r, r-1; r, r-1} \\
\bQ_{r, 0; r, 0} \bQ_{r, 0; r, 0}^{\prime} = \bI_{d_r}
\end{gathered}
\end{equation}

Let us call $\cQ$ the set of all such $\bQ$'s. We have a pairing:
$\cQ \times \cGO\mapsto \cGO$ and the likelihood function is unchanged if $\bG^o$ is replaced by $\bQ\bG^o$.

Let $O(m)$ be the set of all square orthogonal matrices of size $m$. Set $d_0 = m -\fll$. For each matrix $\Gperp\in \Mat(d_0, m)$ such that $\GO \in O(m)$, ($\Gperp$ together with $\bG_{:, 0}$ form an orthonormal basis) we can find for $p \geq r \geq 1, r > l > 0$ matrices $\bC_{r, l}\in \Mat((d_r, \sum_{j=0}^{r-l-1}d_j)$ such that
\begin{equation}\label{eq:Go_rep}\bG^{o}_{r, l} = \bC_{r, l} \begin{bmatrix}\bG^o_{r-l-1}\\ \vdots\\ \bG^o_1\\ \Gperp\end{bmatrix} \end{equation}
Conversely, given $((\bC_{r, l})_{p\leq r \leq 1, l >0}, \bO)$, where $\bC(r, l) \in \Mat(d_r, \sum_{j=0}^{r-l-1}d_j)$ and $\bO\in O(m)$, we can reconstruct an element $\bG^o$ by first decompose $\bO$ to $\begin{bmatrix}\bO_{p, p-1}\\ \vdots\\ \bO_{1, 0} \\ \bO_{\perp} \end{bmatrix}$; then set $\bG^o_{r, 0}= \bO_{r, 0}$; $\bG^o_{r, l} = \bC_{r, l} \begin{bmatrix}\bO_{r-l-1}\\ \vdots\\ \bO_1\\ \bO_{\perp}\end{bmatrix}$ for $l > 0$. $\bG^o$ constructed that way is an element of $\cGO \subset \cG$.
\end{proposition}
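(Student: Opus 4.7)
The proposition has three parts: the characterization of $\cQ$, the invariance of the likelihood under the pairing $\cQ\times\cGO\to\cGO$, and the parameterization by $((\bC_{r,l}),\bO)$.

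For the characterization, sufficiency is immediate: if $\bQ$ has the form in \cref{eq:Q_format}, then $(\bQ\bG^o)_{r,l}=\bQ_{r,0;r,0}\bG^o_{r,l}$, and the orthogonality of $\bQ_{r,0;r,0}$ transfers \cref{eq:Orthonormal} and \cref{eq:Orthogonal2} from $\bG^o$ to $\bQ\bG^o$. Necessity is the main obstacle. The plan is to test against the family of $\bG^o$'s with $\bG^o_{r,l}=0$ for $l>0$ and $\bG^o_{:,0}$ an arbitrary orthonormal matrix (these are manifestly in $\cGO$). Applying \cref{eq:Orthonormal} to $\bQ\bG^o$ shows that the block matrix $(\bQ_{r,0;\rho,0})_{r,\rho}$ is orthogonal; by the third line of \cref{eq:Szero} it is also block upper triangular, so it must be block diagonal, giving $\bQ_{r,0;r,0}\bQ_{r,0;r,0}^{\prime}=\bI_{d_r}$ and $\bQ_{r,0;\rho,0}=0$ for $r\neq\rho$. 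Propagation via \cref{eq:Sdiagonal} yields $\bQ_{r,j;r,j}=\bQ_{r,0;r,0}$. Applying \cref{eq:Orthogonal2} to the same $\bQ\bG^o$ then gives $\bQ_{r_1,l_1;r,0}\bQ_{r,0;r,0}^{\prime}=0$ whenever $l_1>\max(0,r_1-r-1)$; combined with the second line of \cref{eq:Szero}, every off-diagonal entry on the vertical walls $(:,:,\rho,0)$ vanishes. Since \cref{eq:Sdefined} reduces $\bQ$ to these vertical-wall entries, and \cref{eq:Sdiagonal} recovers the rest by diagonal sliding, $\bQ$ has exactly the form \cref{eq:Q_format}.

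Likelihood invariance follows because $\bQ\in\cS$ commutes with $\bF$, so the realizations $\{\bH,\bF,\bG^o\}$ and $\{\bH\bQ^{-1},\bF,\bQ\bG^o\}$ define the same transfer function, the same coefficients $\bPhi_i$, and the same residual determinant \cref{eq:resi_covar}; this reuses the equivalence of realizations recorded at the start of Section~\ref{sec:equivalence}.

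For the parameterization, the forward direction reads off from \cref{eq:Orthogonal2}: $\bG^o_{r,l}$ is orthogonal to every $\bG^o_{\rho_1,0}$ with $\rho_1\geq r-l$, so after completing $\bG^o_{:,0}$ to the orthonormal basis $\GO$ the vector $\bG^o_{r,l}$ lies in the span of $\bG^o_{r-l-1,0},\ldots,\bG^o_{1,0},\Gperp$, and its coordinates in that basis give $\bC_{r,l}\in\Mat(d_r,\sum_{j=0}^{r-l-1}d_j)$ satisfying \cref{eq:Go_rep}. For the converse, the construction makes $\bG^o_{:,0}$ orthonormal as rows of $\bO$, and $\bG^o_{r,l}\bG^{o\prime}_{r_1,0}$ vanishes precisely when $\bO_{r_1,0}$ does not appear in the truncated basis used for $\bG^o_{r,l}$, that is when $r_1\geq r-l$, equivalently $l>\max(0,r-r_1-1)$ for $l>0$; so both orthogonality relations hold and $\bG^o\in\cGO$.
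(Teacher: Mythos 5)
Your proposal is correct and takes essentially the same route as the paper: both arguments rest on the structure proposition for $Centr(\bF)$ (\cref{eq:Sdiagonal}, \cref{eq:Szero}, \cref{eq:Sdefined}), the orthogonal relations of \cref{prop:GGS}, the invariance of the likelihood under all of $\cS$, and reading off $\bC_{r,l}$ as the coordinates of $\bG^o_{r,l}$ in the completed orthonormal basis. Your necessity step---testing $\bQ$ on elements of $\cGO$ whose $l>0$ blocks vanish to force the off-diagonal wall blocks to zero---is simply a fleshed-out version of the paper's terse remark that those blocks satisfy equations of the form \cref{eq:Ssolve} and must therefore vanish.
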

Let us clarify that when $l = r-1$, \cref{eq:Go_rep} only has one block $\Gperp$.
\begin{proof} \cref{eq:Q_format} follows from \cref{eq:Sdiagonal} and the orthogonal relation of $\cGO$. In particular this means applying $\bQ$, the block $\bG_{r, 0}$ is transformed to $\bQ_{r, 0; r, 0}\bG_{r, 0}$ and thus orthogonal to $\bG_{\rho, 0}$ if $\rho \neq r$. The off diagonal blocks of $\bQ$ must satisfies an equation of the form \cref{eq:Ssolve}, and the orthogonal relation just mentioned means they are zeros. The fact that the likelihood function is unchanged by $\cQ$ follows from the fact that it is unchanged under $\cS$.

Given $\bG^o$ and $\Gperp$, we can take $\bC_{r, l}$ as the coefficients of $\bG^o_{r,l}$ in the basis $\begin{bmatrix}\bG^o_{r-l-1}\\ \vdots\\ \bG^o_1\\ \Gperp\end{bmatrix}$ by the orthogonal relations of \cref{prop:GGS}. On the other hand any linear combinations of that basis satisfies the orthogonal relation required by $\cGO$.
\end{proof}
For $j > 0$, $\bG_{:, j}$ could be arbitrarily large, $\cG$ and $\cGO$ are not bounded. However, we have the following proposition:
\begin{proposition}
\label{prop:bounded}
$\cR$ is bounded by maximum and minimum of $\frac{u\bA u'}{u\bB u'}$ with $u \in \Mat(\nmin, pm)$ is of full row rank with the later expression has maximum and minimum calculated by the generalized invariant subspace algorithm.
\end{proposition}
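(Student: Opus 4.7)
The plan is to reduce the claim to a standard determinant--ratio problem on a Grassmannian and then invoke the generalized eigenvalue/invariant subspace theory. First, by \cref{th:likelihood}, for every $\bG\in\cG$ the matrix $u:=\kappa(\bG)$ is an element of $\Mat(\nmin,pm)$ of full row rank, and by construction
\[\cR(\bG)\;=\;\frac{\det(u\bA u')}{\det(u\bB u')}.\]
Hence the image $\cR(\cG)$ sits inside the image of the unconstrained functional $\Phi(u):=\det(u\bA u')/\det(u\bB u')$ over full--row--rank $u\in\Mat(\nmin,pm)$, and it suffices to show that $\Phi$ is bounded and to identify its extremes.

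Second, I would note that $\Phi(Mu)=\Phi(u)$ for any invertible $M\in\Mat(\nmin,\nmin)$, because both numerator and denominator pick up the common factor $\det(M)^{2}$. Thus $\Phi$ descends to a function on the Grassmannian of $\nmin$--dimensional subspaces of $\R^{pm}$, which is compact. Under the standard sample non-degeneracy assumption that $\bB=\Xlag\Xlag'$ is positive definite (so $\bA\succeq 0$ as a Schur complement), the denominator never vanishes on this Grassmannian, $\Phi$ is continuous, and therefore both the supremum and the infimum are attained.

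Third, I would compute the critical points by mimicking the gradient calculation already carried out in the $p=1$ reduction in the introduction. Setting
\[\nabla_{\delta u}\log\Phi \;=\; 2\Tr\bigl((u\bA u')^{-1}u\bA\,\delta u'\bigr) \;-\; 2\Tr\bigl((u\bB u')^{-1}u\bB\,\delta u'\bigr)\;=\;0\]
for all unconstrained directions $\delta u$ yields $u\bA=\Gamma\,u\bB$ with $\Gamma=(u\bA u')(u\bB u')^{-1}$. The change of variable $\tilde u=u\bB^{1/2}$ transforms this into the symmetric invariant--subspace equation $\tilde u\,(\bB^{-1/2}\bA\bB^{-1/2})=\Gamma\,\tilde u$. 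The critical $u$'s therefore correspond to $\nmin$--dimensional invariant subspaces of the pencil $(\bA,\bB)$, and the value of $\Phi$ at such a critical point equals the product of the associated $\nmin$ generalized eigenvalues. The global maximum and minimum of $\Phi$ are thus the products of, respectively, the $\nmin$ largest and $\nmin$ smallest generalized eigenvalues of $(\bA,\bB)$, which are precisely what the generalized invariant--subspace algorithm returns. Combining with the inclusion $\cR(\cG)\subseteq \Phi(\{u\text{ full row rank}\})$ from the first step gives the two-sided bound.

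The main obstacle I foresee is controlling degeneracy: one must rule out the possibility that the denominator $\det(\kappa(\bG)\bB\kappa(\bG)')$ approaches zero along a sequence in $\cG$ with $\kappa(\bG)$ still full row rank, for otherwise the ``bound'' in the proposition could be vacuous at the upper end. Under $\bB\succ 0$ this cannot happen on the Grassmannian, but a careful argument is needed because $\cG$ itself is unbounded (entries $\bG_{:,j}$ for $j>0$ are unrestricted); the $GL(\nmin)$--invariance used in the second step is exactly what washes out this apparent non-compactness. With that caveat handled, the remaining steps are routine linear algebra parallel to the reduced--rank $\VAR(1)$ derivation of the introduction.
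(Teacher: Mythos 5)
Your proposal is correct and takes essentially the same route as the paper: the whole argument rests on the observation that $\kappa(\bG)$ is a full-row-rank element of $\Mat(\nmin, pm)$, so $\cR(\bG)$ lies in the range of the unconstrained determinant ratio $\det(u\bA u')/\det(u\bB u')$, which is exactly the paper's one-line proof. The extra material you supply (the $GL(\nmin)$-invariance, descent to the compact Grassmannian, attainment under $\bB\succ 0$, and the identification of the extremes with products of generalized eigenvalues of the pencil $(\bA,\bB)$) fills in details the paper leaves implicit in the phrase ``calculated by the generalized invariant subspace algorithm,'' and does so correctly.
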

\begin{proof} This follows from the fact that the image of $\kappa$ is a subset of $\Mat(\nmin, pm)$.
\end{proof}
The example for $p=2$ above suggests that for the general case the likelihood function could be defined on a geometric object of higher dimension. We will return to this topic in \cref{sec:examples} as well as the appendix.

\section{Parameter reduction}
It is well-known that reduced-rank regression reduces the number of parameters by $(m-\fll)(k-\fll)$. We could see this by observing that $\bH$ and $\bG$ has $k\fll$ and $m\fll$ parameters, and we can replace $\{\bH, \bG\}$ by $\{\bH\bS^{-1}, \bS\bG\}$, where $\bS$ has $\fll^2$ parameters, so totally we have $(m+k)\fll - \fll^2$ free parameters, a reduction of $mk -(m+k)\fll + \fll^2$ free parameters versus full regression. For the state-space model we have:
\begin{proposition}
The total number of parameter reduction for state-space model of structure $\hat{\Psi}=[d_1,\cdots,d_p]$ is
\begin{equation}
\sum_{i=1}^p (k -\sum_{j\geq i}d_i)(m -\sum_{j\geq i}d_i)
\end{equation}
\end{proposition}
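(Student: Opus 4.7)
The plan is to compute the parameter reduction as the difference between the parameter count of the unconstrained $\VARX(p)$ and the effective parameter count of the minimal $\AR$-state-space realization after quotienting out the equivalence group $\cS = Centr(\bF)$. An unconstrained $\VARX(p)$ carries $pkm$ free entries (one matrix $\bPhi_i$ of size $k\times m$ for each lag). A minimal state-space representation of shape $\hat\Psi$ fixes $\bF = \bF_{\hat\Psi}$ (contributing zero free parameters) and leaves $\bH\in\Mat(k,\nmin)$ and $\bG\in\Mat(\nmin,m)$ free, for a raw count of $(k+m)\nmin$. Following the equivalence observation of \cref{sec:equivalence}, the raw realizations are identified under the simultaneous action $(\bH,\bG)\mapsto(\bH\bS^{-1},\bS\bG)$ for $\bS\in\cS$, whose dimension was computed in \cref{eq:Sdim} to equal $\sum_{i=1}^p\bigl(\sum_{j\geq i}d_j\bigr)^{2}$.

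The first step is therefore to justify that the net free-parameter count of minimal realizations equals $(k+m)\nmin - \dim\cS$. This requires checking that the $\cS$-action is (generically) free on the open subset where both rank conditions of Kalman's proposition hold, so that the parameter count of the orbit space is obtained by plain subtraction. The stabilizer argument is short: if $\bS\bG = \bG$ and $\bH\bS^{-1}=\bH$, looking at the $(:,0)$ block of the first equation together with the full row rank of $\bG_{:,0}$ forces $\bS_{\rho,0;\rho,0}=\bI$ for each $\rho$; the diagonal relations \cref{eq:Sdiagonal} then propagate this along the diagonals, and the vertical-wall equations combined with full column rank of $\bH_{:,0}$ kill the remaining off-diagonal blocks $\bS_{\rho_1,j;\rho,0}$. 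Hence the stabilizer is trivial and the dimension count is exact.

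The second step is purely algebraic. Subtracting the net state-space count from the full count gives
\begin{equation*}
\Delta \;=\; pkm \;-\; (k+m)\nmin \;+\; \sum_{i=1}^{p}\Bigl(\sum_{j\geq i}d_j\Bigr)^{2}.
\end{equation*}
Using the swap-of-summation identity $\nmin = \sum_{i=1}^{p} id_i = \sum_{i=1}^{p}\sum_{j\geq i}d_j$, one rewrites $pkm = \sum_{i=1}^{p}km$ and $(k+m)\nmin = \sum_{i=1}^{p}(k+m)\sum_{j\geq i}d_j$, so every term in $\Delta$ is indexed by $i\in\{1,\dots,p\}$. Completing the square inside the sum yields
\begin{equation*}
\Delta \;=\; \sum_{i=1}^{p}\Bigl(k-\sum_{j\geq i}d_j\Bigr)\Bigl(m-\sum_{j\geq i}d_j\Bigr),
\end{equation*}
which is the claimed formula.

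The only genuinely nontrivial point is the first step, i.e. showing that the subtraction $(k+m)\nmin-\dim\cS$ is correct as a parameter count and not merely an upper bound. Once the triviality of the stabilizer is established using the rank conditions from Kalman's proposition, the remainder is bookkeeping and the double-sum identity for $\nmin$. As a sanity check, specializing to $p=1$ and $\hat\Psi=[d]$ gives $\nmin=d$, $\dim\cS=d^2$, and the formula collapses to $(k-d)(m-d)$, recovering the classical reduced-rank count displayed in \cref{tab:summarize}.
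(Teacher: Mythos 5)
Your proposal is correct and follows essentially the same route as the paper: count the parameters of $\bH$ and $\bG$ given a fixed $\bF_{\hat\Psi}$, subtract $\dim\cS$ using \cref{eq:Sdim}, and compare with the full count $pkm$. You additionally verify the freeness of the $\cS$-action and carry out the summation-swap/complete-the-square identity explicitly, details the paper leaves implicit ("which we see is the same expression"), but the underlying argument is identical.
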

From this, we see there is no parameter reduction if $d_p = h=\min(p, k)$. The largest possible parameter reduction is $p(m-1)(k-1)$, which corresponds to $d_p=1, d_1=\cdots=d_{p-1}=0$. A change in $d_p$ has the most effect on the change in the number of free parameters.
\begin{proof}
We count the number of parameters of $\bH$ and $\bG$ and subtract by the number of parameter of $\bS$ to count the number of free parameter. The reduction is:
$$pmk - (m+k)(\sum j d_j) + \sum_{i=1}^p(\sum_{j\geq i} d_j)^2$$
which we see is the same expression as in the proposition.
\end{proof}
We plot the number of reductions versus minimal state-space dimension, and allocation rank $d_i$ in \cref{fig:param_reduction} for $m=10, p=5$. The averages are taken over all possible $\Psi$. We also fix four structural parameters and plot the fifth. The graph illustrates the point that we have the most parameter saving for higher exponent.
\begin{figure}
\includegraphics[scale=0.4]{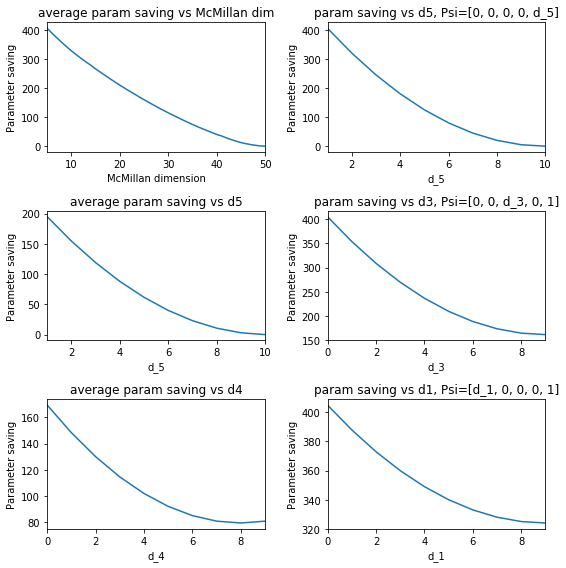}
\caption{Parameter reductions versus space-state dimension and structure parameters.}
\label{fig:param_reduction}
\end{figure}

\section{Examples.}
\label{sec:examples}
In \parencite{minmal_varx_code} we provided the python package implementing this model. The  open source notebook {\it minimal\_varx} in that package allows one to test the model on $colab$ environment without installing it on a local machine. We provided a number of examples in the notebook, which we would like to summarize the results here.

The main class in the package is $varx\_minimal\_estimator(\Psi, m)$. It can evaluate the likelihood function as well as its gradient given a matrix $\bG$. Given data matrices $\bX$ and $\bY$ we provide four data fitting method: {\it simple\_fit, gradient\_fit, hessian\_fit} and {\it manifold\_fit} (We implemented manifold\_fit for the case $\fll=h$ only, and use the orthogonal constraints but not the full reduction by symmetry of $\cQ$). The first three methods vectorize $\bG$ then just use a standard optimizer. All methods estimate $\bG$ to maximize the likelihood. The examples show that if $\Psi$ is known, the algorithm converges relatively fast. After fitting, $\bH, \bF, \bG, \bPhi$ could be read off the estimator. To forecast, we can use the {\it predict} method of the same class.

The package contains many utility functions, including those to produce the Smith-McMillan form for a polynomial matrix, as well as determinants of polynomial matrices and test for stability. We use a utility function to create a random stable polynomial of state-space form $\Psi$. We use this function to generate the tests samples. We provided a number of examples with different $m$ and $p$. The examples aim to clarify the concepts here. They also give a flavor of the behavior of this estimation method when the structure of $\bF$ changes. In our examples, we mostly work with $1000$ samples.

Let us first consider the case $m=2, p=2$. There are $\begin{pmatrix}p+m-1 \\ p\end{pmatrix} = 3$ possible configurations of $\Psi$. The case $\Psi= [(2, 2)]$, is the full rank case. There are two reduced-rank cases: $\Psi=[(2,1), (1, 1)]$ and $\Psi=[(2, 1)]$:

\subsection{The case $m=2, \Psi=[(2,1), (1, 1)]$: a circle}
In this case, both $\bG_{2, 0}$ and $\bG_{1, 0}$ has full rank $1$. In the first test, we randomly generate a number of stable matrices with structure parameter $\Psi$ then try to recover it using {\it simple\_fit}. We got reasonable convergence for our test. Applying \cref{prop:GGS}, $\bG_{2, 0}$ and $\bG_{1, 0}$ could be made to be orthogonal, so $\bG_{2, 0} = [\cos(t), \sin(t)]$ and $\bG_{1, 0} = [-\sin(t), \cos(t)]$. The generalized Rayleigh quotient is thus a ratio of two polynomial functions in $\cos(t)$ and $\sin(t)$, invariant when $\cos(t)$ is replaced by $-\cos(t)$, $\sin(t)$ by $-\sin(t)$ and hence it is sufficient to examine for $t\in [0,\pi]$. The enclosed graph plots the negative log likelihood for different values of $t$, and shows the minimum negative log likelihood is close to the likelihood of the data generation $\bG$, which is $-1.27233$.
\begin{figure}[h]
\includegraphics[scale=0.4]{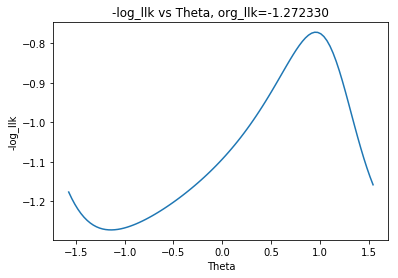}
\caption{$m=2, \Psi=[(2,1), (1, 1)]$, parameters versus minus log likelihood.}
\end{figure}
Below is the original versus the fitted $\bPhi$
$$
\bPhi_1^{\text{org}} = \begin{bmatrix}-0.21712203 & -0.5690077 \\
   0.43775564 &-0.98005326\end{bmatrix}
\bPhi_2^{\text{org}} =
\begin{bmatrix}
 0.12523273 &  0.04440429\\
  -0.25249089 & 0.17464151
\end{bmatrix}
$$
$$
\bPhi_1^{\text{fitted}} = 
\begin{bmatrix}
-0.19686572 & -0.53498222\\
   0.42740042 & -0.95142986
  \end{bmatrix}
\bPhi_2^{\text{fitted}} = 
 \begin{bmatrix} 0.12781235 & 0.05058104 \\
  -0.27748382 & 0.20364765\end{bmatrix}
$$
We also generate a random $\bG$ and show how $LQ$ factorization reduces it to orthogonal one.
\subsection{The case $m=2, \Psi=[(2,1)]$: a circle and its tangents.}
\label{subsec:circle_tangent}
In this case $\bG$ has the form $\begin{pmatrix}\bG_{2, 1}\\ \bG_{2, 0}\end{pmatrix}$. As before, in the first test we do not put the orthogonal restriction on $\bG$. We generate a number of stable polynomials with minimal state-space configuration $\Psi$ then recover them using {\it simple\_fit}. We get convergence as expected. The notebook also shows an example of $LQ$ factorization in this case.

This is a good example to illustrate the geometric concepts of the problem. Note that in this case $w = \bG_{2, 1}$ and $v= \bG_{2, 0}$ are both two-dimensional vectors. By \cref{prop:GGS}, $v$ could be assumed to have norm $1$, and $v.w = 0$. So we could think of $\bG_{2, 0}$ as being constrained to the unit circle, while for each $v$, $\bG_{2, 1}$ is being constrained to the line tangent to the unit circle at $v$. Therefore, the whole configurations of $\bG$ could be restricted to that of pairs $(v, w)$ of a point on the unit circle and a vector on its tangent line at $v$, not unlike the configuration space of position and velocity of a circular motion in classical physics problems. As before, if we parameterize $v = (\cos t, \sin t)$ then we can write $w= 
(-c\sin t, c\cos t) = cv_{\perp}$ with $v_{\perp} = (-\sin t, \cos t)$. So $O = \begin{bmatrix} -\sin t & \cos t \\ \cos t & \sin t  \end{bmatrix}$ as in \cref{prop:Gparam}. We plot the likelihood function on a two-dimensional surface of $(t,c)$, as well as plotting it for a number of fixed $t$ as below. The determinant ratio
In this example, the likelihood corresponding to $\bG$ used in data generation is $-1.101996$, vs estimated value $-1.096700$
\begin{figure}[h]
\includegraphics[scale=0.4]{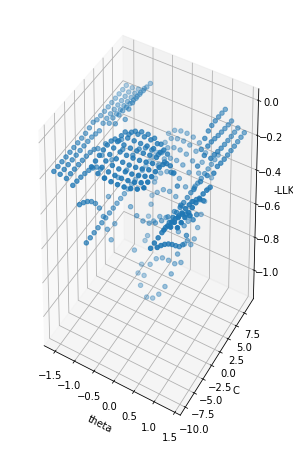}
\includegraphics[scale=0.4]{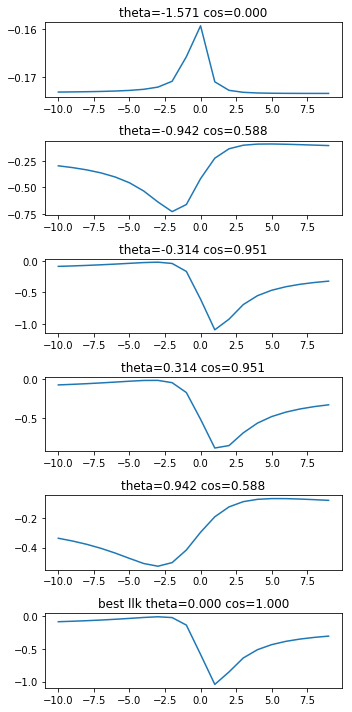}
\caption{Parameters versus minus log likelihood. Circle with tangent.}
\end{figure}
$$\bPhi_1^{\text{org}} = \begin{bmatrix}
 0.74186938 & -0.05524596 \\
  [-0.24536459 & 0.73052047
  \end{bmatrix}
\bPhi_2^{\text{org}} = \begin{bmatrix}
  0.12980835 & -0.05497244\\
  -0.04293259 & 0.14280693
  \end{bmatrix}
  $$
  $$
\bPhi_1^{\text{fitted}}
\begin{bmatrix}
 0.74142713 & -0.03649409\\
  -0.24909784 & 0.75546506
  \end{bmatrix}
  \bPhi_2^{\text{fitted}} = 
\begin{bmatrix}
  0.10750374 & -0.07386304 \\
  -0.03611811 & 0.13257722
  \end{bmatrix}
$$
\subsection{VAR(p) with $m=2$}
This is the generalization of the last two examples. As $AR(p)$ is well understood, $m=k=2$ is also the natural next step. $\Psi=[(p, 2)]$ is the case of full rank regression, which is already well studied, so we will assume $d_p = 1$. As before, there are two sub-cases, $\Psi=[(p, 1), (\rho, 1)]$ and $\Psi=[(p, 1)]$.  We show the reduction of the search space for $\bG^o$ by $LQ$ factorization discussed here in the notebook on our code page.

For the first case, the configuration space of $\bG^o$ is a circle with $p-\rho-1$ tangent vectors. We have $\bG^o_{p, 0}$ and $\bG^o_{\rho, 0}$ form an orthonormal basis. $\bG^o_{p, 1},\cdots,\bG^o_{p-\rho-1}$ are proportional to $\bG^o_{\rho, 0}$. In this case the full search space of $\bG$ is of dimension $2\nmin = 2(p+\rho)$ while the reduced space for $\bG^o$ has dimension $p-\rho$.
For the second case, $\bG^o_{p, 0}$ could be made of norm $1$, and  $\bG^o_{p, 1},\cdots ,\bG^o_{p, p-1}$  are orthogonal to it. So this case is a circle with $p-1$ tangent vectors. The minimal state-space dimension is $p$ and the search space for $\bG$ is of dimension $2p$, while the search space for $\bG^o$ is of dimension $p$.

This example illustrates the point that even when we can search on the full space of $\bG$ for smaller $m$ and $p$, the reduced space is of just half the dimension, but it is more complex to describe. We do not implement an optimization routine here but since we can parameterize the search space for $\bG^o$ explicitly, it could be done via the chain rule and a standard optimizer.

\subsection{$\Psi=[(p, d)]$ and the Velu-Reinsel-Wichern model.}
In \parencite{VeluReinselWichern}, the authors introduced a model of form
$$\by_t = A(L) B(L) L \bx_t +\bep_t$$
$\bx_t$ in their paper is $\bx_{t-1}$ in our notation. Here $A(L)$ is a $k\times d$ polynomial matrix function of degree $p_1$ and $B(L)$ is a $d\times m$ polynomial matrix function of degree $p_2$ (we switch $p_1$ and $p_2$ as used in their paper.) Set $p = p_1+p_2+1$. Consider the state-space model with $\bF = \bK(p, d)$. Set $\bG_{p, i}= B_{p_1-i}$ if $i \leq p_2$, and zero otherwise, $\bH_{p, i} = A_{p_2-i}$ if $i\leq p_1$, and zero otherwise. From \cref{eq:Phi}:
$$\bH(\bI-\bF L)^{-1}\bG = A(L) B(L) L$$
We note a number of blocks are set to zero in this model. The paper considers the case when $A(L)$ is constant, corresponding to the case where only $\bH_{p, 0}$ is non-zero. We can modify our framework to obtain the likelihood function for non constant $A(L)$. Note that the regressor for $A_i$ is $\sum B_{j} L^{j+i+1}\bX$ we get
$$[A_0,\cdots,A_{p_1}] = \bY\bX'_B(\bX_B\bX'_B)^{-1}$$
where $\bX_B = \upsilon(B)\Xlag$ and $\upsilon(B)$ is a block matrix of $(p_1+1)$ row $\times (p_1+p_2+1)$ column blocks.
\begin{equation}
\upsilon(B) = \begin{bmatrix} B_{p_2} & B_{p_2-1} &\cdots & B_{0} & 0 & \cdots & 0\\
0 & B_{p_2} &\cdots & B_{1} & B_{0} & \cdots & 0\\
\vdots & \vdots &\cdots & \vdots & \vdots & \cdots & \vdots\\
0 & 0 &\cdots & 0 & \vdots & B_1 & B_0\\
\end{bmatrix}
\end{equation}
We could think of $\upsilon$ as a truncated $\kappa$. For the case $p_1=0$ considered in their paper, $\upsilon(B)$ has only one row block and $p_2+1$ column blocks. With $\upsilon(B)$ in place of $\kappa(G)$, the result of \cref{th:likelihood} still applies, if $\upsilon(B)$ is of full row rank. We will assume this is the case, this means $\begin{bmatrix}B_{p_2} & B_{p_2-1} & \cdots & B_{0}\end{bmatrix}$ is of full row rank.

\subsection{Likelihood estimation over different $\Psi$'s.}
For $m$ and $p$ sufficiently large, the total number of configurations $\begin{bmatrix}h+p-1\\p\end{bmatrix}$ increases fairly quickly. The number of possible minimal state-space dimensions only increase linearly between $p$ and $hp$. A suggested strategy is not to iterate over all possible $\Psi$, but rather start with an $\nmin$ then search for $\bF$ (nilpotent but not necessarily Jordan) using continuous optimization method. However, it should be instructive to get a sense how $\Psi$, $\nmin$ and likelihood function interact. We plot the number of configurations of $\Psi$ per state-space dimension for $h=10, p=5$. The total number of distinct $\Psi$'s is 2002. The minimum states space dimension is between $5$ and $50$ and the number of $\Psi$ per minimum state-space dimension can be plotted to be of a bell-shape curve with the middle dimensions having the most number of $\Psi$, as in \cref{fig:PsiMcMillan}
\begin{figure}[h]
\includegraphics[scale=0.4]{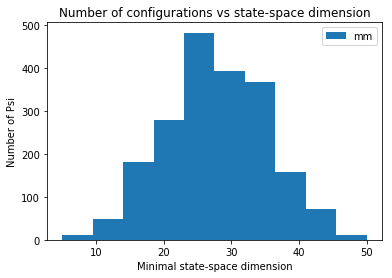}
\caption{Number of $\Psi$ v.s. McMillan degree}
\label{fig:PsiMcMillan}
\end{figure}

We also generate a stable matrix with $m=5, p=2, \Psi_{gen}=[(2, 2), (1, 2)]$. If we do not know $\Psi_{gen}$, we may need to search over the 15 possible $\Psi$ configurations, and we can summarize the optimization results in \cref{tab:all_psi}. From the table, once we get the correct $\Psi$, iteration over more complex $\Psi$ does not improve the likelihood. This suggests that we may not need to search over all $\Psi$, but aim to find a $\Psi$ with a sufficiently small state-space dimension with likelihood function sufficiently close to the full regression likelihood.
\begin{table}[H]
\begin{tabular}{lrrrrr}
\hline
{} &    2 &    1 &   org\_llk &  fitted\_llk &  success \\
\hline
0  &  1.0 &  0.0 & -8.746828 &   -5.575984 &      1.0 \\
1  &  1.0 &  1.0 & -8.746828 &   -6.715179 &      1.0 \\
2  &  1.0 &  2.0 & -8.746828 &   -7.614981 &      1.0 \\
3  &  1.0 &  3.0 & -8.746828 &   -8.290710 &      1.0 \\
4  &  1.0 &  4.0 & -8.746828 &   -8.342102 &      1.0 \\
5  &  2.0 &  0.0 & -8.746828 &   -7.745144 &      1.0 \\
6  &  2.0 &  1.0 & -8.746828 &   -8.569492 &      1.0 \\
7  &  2.0 &  2.0 & -8.746828 &   -8.754073 &      1.0 \\
8  &  2.0 &  3.0 & -8.746828 &   -8.754607 &      1.0 \\
9  &  3.0 &  0.0 & -8.746828 &   -8.754916 &      1.0 \\
10 &  3.0 &  1.0 & -8.746828 &   -8.756864 &      1.0 \\
11 &  3.0 &  2.0 & -8.746828 &   -8.759571 &      1.0 \\
12 &  4.0 &  0.0 & -8.746828 &   -8.759829 &      1.0 \\
13 &  4.0 &  1.0 & -8.746828 &   -8.760026 &      1.0 \\
14 &  5.0 &  0.0 & -8.746828 &   -8.760028 &      1.0 \\
\hline
\end{tabular}
\caption{Likelihood function for different $\Psi$.}
\label{tab:all_psi}
\end{table}
\subsection{Other examples}
\label{subsec:other_examples}
We ran an example for $m=7, k=5$ with $\Psi=[(2, 2), (1, 2)]$, again the fitted versus original likelihood are close ($19.87$ versus $-19.91$). In a final example, we ran $10$ tests with $k=8, p=3$, $\Psi=[(3, 1), (2, 1), (1, 2)]$. The fitted likelihood is smaller than the original likelihood as seen in \cref{tab:m8p3}. This seems to be an issue with autoregressive noise in the noise series used to generate the sample. We show also the full (i.e. no reduced rank assumption) regression likelihood, it fits with our minimal state-space likelihood well.
\begin{table}[H]
\begin{tabular}{lrrrr}
\hline
{} &    org\_llk &  fitted\_llk &   full\_llk &  success \\
\hline
0 &  -6.778974 &   -7.333462 &  -7.397742 &      1.0 \\
1 &  -7.664920 &   -8.280117 &  -8.329034 &      1.0 \\
2 &  -7.909143 &   -9.150681 &  -9.203443 &      1.0 \\
3 & -14.679623 &  -15.425846 & -15.491684 &      1.0 \\
4 &  -9.291283 &  -10.052678 & -10.098918 &      1.0 \\
5 & -12.626511 &  -13.004966 & -13.049997 &      1.0 \\
6 & -11.045051 &  -12.250215 & -12.296307 &      1.0 \\
7 &  -6.679380 &   -7.898204 &  -7.974920 &      1.0 \\
8 & -10.012286 &  -10.811005 & -11.115286 &      1.0 \\
9 & -64.521764 &  -65.580908 & -65.678105 &      0.0 \\
\hline
\end{tabular}
\caption{Case $m=8, p=3$}
\label{tab:m8p3}
\end{table}

\section{Discussion}
\subsection{An alternative space-space model}
We note that $L^{-1}\bT(L^{-1})^{-1}$ is also strictly proper, so we have a state-space realization:
$$L^{-1}\bT(L^{-1})^{-1} = \bH_a (L\bI -\bF_a)\bG_a$$
From here
$$\bT(L)^{-1} = \bH_a(\bI - \bF_a L)^{-1}\bG_a$$
For Vector Autoregressive model, we have a representation:
$$\bT(L)^{-1} = \bI - \sum\bPhi_i L^{i} = \bH_a (\bI - \bF_a L)^{-1}\bG_a$$
We note also zero is the only pole of $L^{-1}\bT(L^{-1})^{-1}$, so $\bF_a$ is again a Jordan matrix. Assuming we have the Smith-McMillan form:
$$L^{-1}\bT(L^{-1}) = A(L)S(L)B(L)$$
That means $A(L), B(L)$ are invertible polynomial matrices and $\bS(L)$ is diagonal satisfying the Smith-McMillan divisibility requirement. Then
$$L^{-1}\bT(L^{-1})^{-1} = B(L)^{-1}L^{-2}S(L)^{-1}A(L)^{-1}$$
$L^{-2}S(L)^{-1}$ is diagonal but not necessarily satisfying Smith-McMillan divisibility requirement, but we can make it to be, as in the final step of the Smith-McMillan algorithm. So $L^{-1}\bT(L^{-1})^{-1}$ and the traditional state-space form are intimately related. However, there is no direct link between our $\AR$-state-space realization and the traditional one. This alternative model is harder to estimate, even for $p=1$.

\subsection{Rank condition on $\bH$}
So far we recover $\bH$ by regression. Per Kalman, we should confirm the rank for $\bH_{:, 0}$. If it is not of full rank, the structure parameter $\Psi$ that we work with may not be minimal, and we can replace it by one with more reduced structure.

\subsection{Determining the structure parameters}
As $p$ and $m$ increases, the number of configurations for $\Psi$ increases, polynomially in $m$ and exponentially in $p$. Given that we have relatively fast convergence, a parallel search on configurations is certainly possible for a reasonable range of $p$ and $m$. However it may be unnecessary. The objective of the search should be for the configuration that balance between parameter reduction and close approximation to the full likelihood. As pointed out in earlier analysis, the number of parameter saving is impacted more by decreasing $d_i$ for a higher $i$. This motivates a search process where we do a full regression to obtain $\bPhi$, then applying a rank test to reduce the rank of $\bG_{:, 0}$ which penalizing higher exponents of the Jordan matrix. This could be done sequentially in descending order of exponent, stopping after a number of steps based on a balance between likelihood and parameter count. The search on each exponent could be done using a bisection search if $m$ is sufficiently large, and would have a $\log(m)$ iterative cost. So this method should be applicable even for large value of $m$ and $p$. This analysis could be done with the help of an information criteria or by a likelihood ratio criteria ($AIC$ or $BIC$). Another way is to formulate an objective function that could penalize a norm of $\bF^i$ for higher $i$. This may be a future research direction.

\subsection{Convergence Analysis}
It is well-known that Rayleigh quotient for a positive definite matrix is convex and has a unique minimum. By now, we know little about the analytic property of $\cR(\bG, \bA, \bB)$, except that its Hessian is known and it is bounded. For general $\bA$ and $\bB$ not necessarily constructed from the regression analysis here, it would be interesting to analyze the critical points of the $\cR$. One question would be if its minimal value is at a finite point, or would it be at a direction where $\bG(r, l)$ goes to infinite ($l > 0$). A second question would be if it has more than one local minima.

\subsection{Generalization to VARMA}
As Kalman's result addresses the multiple root case of minimal realization, a natural question is whether the results presented here has a full Gilbert-Kalman picture analogue. The answer is yes, which we will address in a forthcoming article. We hope the full result will give a new effective method in Linear System Identification.

\subsection{Further directions}
The approach could be adjusted to address the drift and seasonal adjustments. We have not addressed integration in this paper, however it seems plausible that it could be done with appropriate modification. A motivation for this paper comes from Johansen's approach to integration. Fixing a structure $\Psi$, we can study other loss functions depending on $\bG$ to go beyond the Gaussian assumption. Instead of $\bH$ applying linearly on $\kappa(\bG)L^i\bX$ we can assume a non-linear format. For example, we can use the kernel trick to replace $\Xlag\Xlag', \bY\bY'$ and $\bY\Xlag'$ with kernel values. We look forward to testing the model with real data.  We also look to improve on the optimization algorithms.

\begin{appendices}
\section{Vector bundle on flag manifolds}
To take full advantage of the invariant property of the likelihood function, manifold optimization may be an attractive option. In this appendix we summarize the results in term of flag manifolds. The uninterested reader can skip the appendix, consider it as a discussion on a particular optimization technique that help reduces the search space to a lower dimensional set taking advantage of the invariant property when replacing $\bG^o$ by $\bQ\bG^o$. On the other hand, the geometric picture could be thought of as a high dimensional generalization of the configurations of pairs of a particle moving on a circle and its velocity vector as explained in \cref{subsec:circle_tangent}.

Let us first fix a few notations.
\begin{itemize}
\item Recall $GL(m)$ is the group of all invertible matrices of size $m\times m$, $O(m)$ is the orthogonal group, $SO(m)$ is the special orthogonal group of all orthogonal matrices of size $m\times m$ with determinant $1$, $S(O(l_1)\times O(l_2)\times \cdots \times O(l_p) \times O(m -\fll))$ is the block diagonal subgroup of orthogonal group with block size $(l_1,\cdots, l_p, m -\fll)$ and determinant $1$.
\item Let $f_1 = l_1, f_i = \sum_{j=1}^{i} l_j$ and $f_{g+1} = m$. Consider $\cF(f_1, \cdots f_{g+1}; \R) = O(m)/(O(l_1)\times O(l_2)\times \cdots \times O(l_p) \times O(m -\fll))$. It is called a real flag manifold. It has an equivalent representation: $SO(m)/S(O(l_1)\times O(l_2)\times \cdots \times O(l_p) \times O(m -\fll))$, see \parencite{ye2019optimization}. (In the literature, it can also be considered as a quotient of $GL(m)$ by a parabolic subgroup of $GL(m)$).

\item Let $\cR(G, \bA, \bB) = \frac{\det(\kappa(G)\bA\kappa(G)'}{\det(\kappa(G)\bB\kappa(G)')}$ be the generalized Rayleigh quotient corresponding to two symmetric matrices $\bA, \bB$ each of size $mp \times mp$.
\end{itemize}
The following theorem describes the manifold that $\cR$ is defined on based on the invariant properties above, it may be just a restatement of results in \cref{sec:equivalence} in a fancier language, but it allows us to apply manifold optimization techniques:
\begin{theorem} The configuration space $\cGO$ and the group of orthogonal diagonal block matrices $\cQ$ have the following properties:
\begin{itemize}
\item $\cQ$ is isomorphic to $O(l_1)\times \cdots O(l_g)$. 
\item The map from $\cGO$ to $O(m)/ O(m-\fll)$ given by first representing $\bG^o$ as a pair $(\bC, \bO)$ with $\bC = (\bC_{r,l})_{r,l}$ and $\bO$ an orthogonal matrix as in \cref{prop:Gparam}, then map $\bO$ to the class $O(m-l)\bO\in O(m)/ O(m-\fll)$ is well-defined: two representations ($(\bC, \bO)$ and $(\bC_1, \bO_1)$ of $\bG^o$ give the same image in $O(m)/ O(m-\fll)$).
\item The above map induces a fiber bundle projection $\bpi$ from $\cGO/\cQ$ to $\cF(f_1, \cdots, f_{g+1}; \R)$. Each fiber is a vector space isomorphic to $\oplus_{r=1}^p \Mat(d_r, \sum_{l=1}^{r-1}\sum_{j=0}^{r-l-1}d_j)$ where $d_0=m-\fll$. So $\cGO/\cQ$ is a vector bundle over $\cF(f_1,\cdots, f_{g+1}; \R)$. We will call it $\cK(\Psi; m)$. When $p = 1$ or $\fll= m$, $\cK$ could be identified with $\cF$.
\item The dimension of $\cK$ is given by $m\sum_{j>0} j d_j - \sum_{i\geq 1}(\sum_{j\geq i}d_j)^2$
\item It is also given by $\sum_{i>j\geq 0}d_i d_j + \sum_{r=1}^p d_r \sum_{l=1}^{r-1}\sum_{j=0}^{r-l-1}d_j$.
\item $\cR$ is a bounded smooth function on $\cK(\Psi, m)$.
\end{itemize}
\end{theorem}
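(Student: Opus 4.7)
My plan is to attack the six bullets in order, leveraging the parameterization of $\bG^o$ established in \cref{prop:Gparam} together with the centralizer characterization in \cref{eq:Q_format}.

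For the first bullet, I would observe that \cref{eq:Q_format} forces every $\bQ\in\cQ$ to be block diagonal with a single free orthogonal block $\bQ_{r,0;r,0}\in O(l_r)$ for each nonzero exponent $r$, and that block is repeated on the diagonal $r$ times by the centralizer condition. The group isomorphism $\cQ\cong O(l_1)\times\cdots\times O(l_g)$ is then just reading off the independent blocks, with the product structure inherited from matrix multiplication.

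For the second bullet, I would show the only indeterminacy in writing $\bG^o=(\bC,\bO)$ comes from the freedom in completing $\bG^o_{:,0}$ to an orthonormal basis: any two choices of $\bO_\perp$ differ by left multiplication by some $\bQ_\perp\in O(m-\fll)$ acting on the last $m-\fll$ rows of $\bO$, and the corresponding $\bC_{r,l}$ coefficients rescale by $\bQ_\perp'$ on the right. Hence the coset $O(m-\fll)\bO$ in $O(m)/O(m-\fll)$ is independent of the choice, and the map is well-defined.

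For the third bullet — the main structural claim — I would factor the projection in two stages. First, quotienting $\cGO$ by $\cQ$ has the effect of replacing each $\bG^o_{r,0}$ by its $\cQ_{r,0;r,0}$-orbit, i.e.\ turning the orthonormal block $\bG^o_{r,0}$ into an $l_r$-plane; combined with the $O(m-\fll)$ indeterminacy of $\bO_\perp$ from step two, this says the resulting $\bO$-data lives in $O(m)/(O(l_1)\times\cdots\times O(l_g)\times O(m-\fll))=\cF(f_1,\dots,f_{g+1};\R)$. Second, for each point of $\cF$, the $\bC_{r,l}$ data transforms by the $O(m-\fll)$ action consistently and the $\cQ$-action on the left (through $\bQ_{r,0;r,0}$) together with the right $O(m-\fll)$-action on $\bO_\perp$ simultaneously rescale the $\bC_{r,l}$'s, but because the $\bC_{r,l}$ decompose in the basis $\{\bG^o_{r-l-1,0},\ldots,\bG^o_{1,0},\bG^o_\perp\}$ the net quotient fiber is $\bigoplus_{r=1}^p\Mat(d_r,\sum_{l=1}^{r-1}\sum_{j=0}^{r-l-1}d_j)$, naturally a vector space. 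The hard part here — the piece I flag as the main obstacle — is verifying local triviality: I would exhibit local sections of the principal $O(l_1)\times\cdots\times O(l_g)\times O(m-\fll)$-bundle $O(m)\to\cF$ (standard for a compact Lie group quotient by a closed subgroup) and then use \cref{prop:Gparam} to transport the $\bC_{r,l}$ parameterization along those sections, producing local trivializations of $\bpi:\cK\to\cF$. The cases $p=1$ (trivial fiber) and $\fll=m$ (empty $\bG^o_\perp$, so all $\bC_{r,l}=0$) collapse the fiber to a point, yielding $\cK\cong\cF$.

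For the dimension count (bullets four and five), I would first use the identity $\dim\cK=\dim\cG-\dim\cS$ (a slice argument: $\cGO$ cuts each $\cS$-orbit in a $\cQ$-orbit, and $\dim\cS-\dim\cQ=\dim\cGO-\dim\cK$ cancels through). This gives $\dim\cK=m\sum jd_j-\sum_{i\geq 1}(\sum_{j\geq i}d_j)^2$ directly from $\dim\cG=m\nmin$ and the parameter count for $\cS$ already proved in \cref{eq:Sdim}. For the second formula, I would add $\dim\cF=\frac{m(m-1)}{2}-\sum_{i=0}^p\frac{d_i(d_i-1)}{2}=\sum_{i>j\geq 0}d_id_j$ (using $d_0=m-\fll$ and $\sum_{i=0}^p d_i=m$) to the fiber dimension $\sum_{r=1}^p d_r\sum_{l=1}^{r-1}\sum_{j=0}^{r-l-1}d_j$ read directly off the fiber, and verify the two expressions agree by a routine algebraic identity (expanding $(\sum_{j\geq i}d_j)^2$ and reindexing). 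Finally, for the last bullet, boundedness is \cref{prop:bounded}, and smoothness follows because $\kappa(\bG^o)$ is polynomial in the entries of $\bG^o$ and $\kappa(\bG^o)\bB\kappa(\bG^o)'$ is positive definite whenever $\bG^o_{:,0}$ is of full row rank (established in the proof of \cref{th:likelihood}); together with the invariance under $\cQ$ already recorded in \cref{prop:Gparam}, $\cR$ descends to a smooth function on $\cK$.
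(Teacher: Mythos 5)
Your proposal is correct and follows essentially the same route as the paper's proof: the same reading of \cref{eq:Q_format} for the structure of $\cQ$, the same $O(m-\fll)$ and $\cQ$ transformation rules on the pair $(\bC,\bO)$ from \cref{prop:Gparam} to get well-definedness and the vector-space fiber, the same dimension counts ($\dim\cG-\dim\cS$ and flag-manifold plus fiber), and the same appeal to \cref{prop:bounded} for boundedness. The only difference is that you spell out local triviality via local sections of $O(m)\to\cF$ and justify smoothness explicitly, details the paper leaves implicit.
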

\begin{proof}
The first statement follows from the diagonal form of $\cQ$, and $\bQ_{\rho, l;\rho,l} = \bQ_{\rho}$ for $p-1\geq l \geq 1$.
The second statement is clear, as another representation of $\bG^o$ would have a form $(\bC\bQ_{\perp}^{\prime}, \begin{bmatrix}\bG_{:, 0} \\ \bQ_{\perp}\Gperp \end{bmatrix}$ for some $\bQ_{\perp}\in O(m-\fll)$. For the next statement, if $\bG^o$ is replaced by $\bQ\bG^o$ with $\bQ\in \cQ$, $(\bC, \GO)$ is transformed to 
$$(\bQ_r\bC_{r, l}\diag(\bQ'_{r-l-1}\cdots \bQ'_1,\bI_{d_0}))_{p\geq r\geq 1, r-1\geq l \geq 0}, \begin{bmatrix}\bQ_p \bG_{p, 0}\\ \vdots \\ \bQ_1 \bG_{1, 0}\\ \Gperp  \end{bmatrix}$$
So $\bpi(\bQ\bG^o)$ is in the same equivalent class with $\bpi(\bG^o)\in \cF(f_1,\cdots, f_{g+1}; \R)$. The fiber is isomorphic to $\bC=\oplus \bC_{r, l}$. The first expression for dimension of $\cK$ is just $\dim(\cG)-\dim(\cS)$, the second expression is sum of the dimensions of flag manifold and of the vector space fiber.

That $\cR$ is bounded is already proved in \cref{prop:bounded}, and it is clearly smooth.
\end{proof}
Optimizing on $\cK$ could be advantageous as in many instances its dimension is much less than $\dim(\cG) = m\sum jd_j$. We have seen in our examples the search space dimension could be reduced by half. If there were a manifold optimization package for $\cK$ we could take advantage of it. To our knowledge such a package is not yet available (however, see \parencite{ye2019optimization}. On the other hand, we can optimize on $O(m)\times \oplus_{r=1}^p \Mat(d_r, \sum_{l=1}^{r-1}\sum_{j=0}^{r-l-1}d_j)$, instead of taking quotient down to the flag manifold bundle level. We use the packages \parencite{manopt}, \parencite{JMLR:v17:16-177} to optimize on the $O(m)$  for the case where $\fll = h$ and offer a method {\it manifold\_fit} in our package.
\end{appendices}

\printbibliography[title={Bibilography}]
\end{document}